\documentclass[12pt]{article}%
\usepackage{amsfonts}
\usepackage{amsmath}
\usepackage{amssymb}
\usepackage{graphicx}%
\setcounter{MaxMatrixCols}{30}
\newtheorem{theorem}{Theorem}

\newtheorem{definition}[theorem]{Definition}

\newtheorem{lemma}[theorem]{Lemma}

\newtheorem{remark}[theorem]{Remark}

\newenvironment{proof}[1][Proof]{\noindent\textbf{#1.} }{\ \rule{0.5em}{0.5em}}
\begin{document}

\title{The strict and relaxed stochastic maximum principle for optimal control
problem of backward systems}
\author{Seid Bahlali\thanks{Laboratory of applied mathematics, University Med Khider,
P.O. Box 145, Biskra 07000, Alg\'{e}ria.}}
\date{}
\maketitle

\begin{abstract}
We consider a stochastic control problem where the set of controls is not
necessarily convex and the system is governed by a nonlinear backward
stochastic differential equation. We establish necessary as well as sufficient
conditions of optimality for two models. The first concerns the strict
(classical) controls. The second is an extension of the first to relaxed
controls, who are a measure valued processes.

\ 

\textbf{Keywords}\textit{. }Backward stochastic differential equation,\ strict
control, relaxed control,\textit{\ }maximum principle, adjoint equation,
{variational inequality, variational principle.}

\ 

\textbf{AMS Subject Classification}\textit{. }93 Exx

\end{abstract}

\section{Introduction}

In this paper we study a stochastic control problem where the system is
governed by a nonlinear backward stochastic differential equation (BSDE\ for
short) of the type
\[
\left\{
\begin{array}
[c]{l}%
dy_{t}^{v}=b\left(  t,y_{t}^{v},z_{t}^{v},v_{t}\right)  dt+z_{t}^{v}%
\,dW_{t},\\
y_{T}^{v}=\xi,
\end{array}
\right.
\]
where $b$ is given function, $\xi$ is the terminal data and $W=\left(
W_{t}\right)  _{t\geq0}$ is a standard $d$-dimensional Brownian motion,
defined on a filtered probability space $\left(  \Omega,\mathcal{F},\left(
\mathcal{F}_{t}\right)  _{t\geq0},\mathcal{P}\right)  $ satisfying the usual
conditions. The control variable $v=\left(  v_{t}\right)  $, called strict
(classical) control, is an $\mathcal{F}_{t}$-adapted process with values in
some set $U$ of $\mathbb{R}^{k}$. We denote by $\mathcal{U}$ the class of all
strict controls.

The criteria to be minimized, over the set $\mathcal{U}$, has the form%
\[
J\left(  v\right)  =\mathbb{E}\left[  g\left(  y_{0}^{v}\right)  +%
{\displaystyle\int\nolimits_{0}^{T}}
h\left(  t,y_{t}^{v},z_{t}^{v},v_{t}\right)  dt\right]  ,
\]
where $g$ and $h$ are given maps, and $\left(  y_{t}^{v},z_{t}^{v}\right)  $
is the trajectory of the system controlled by $v$.

A control $u\in\mathcal{U}$ is called optimal if it satisfies%
\[
J\left(  u\right)  =\underset{v\in\mathcal{U}}{\inf}J\left(  v\right)  .
\]

Stochastic control problems for the backward and forward-backward systems have
been studied by many authors. The first contribution of control problems of
forward-backward systems is made by Peng $\left[  30\right]  $, he obtained
the maximum principle with the control domain being convex. Xu $\left[
34\right]  $ established the maximum principle for this kind of problem in the
case where the control domain is not necessary convex, with uncontrolled
diffusion coefficient and a restricted functional cost. The work of Peng
$\left[  30\right]  $ (convex control domain) is generalized by Wu $\left[
33\right]  $, where the system is governed by a fully coupled forward-backward
stochastic differential equation. Shi and Wu $\left[  32\right]  $ extend the
result of Xu $\left[  34\right]  $ to the fully coupled forward-backward
systems, with convex control domain and uncontrolled diffusion coefficient. Ji
and Zhou $\left[  22\right]  $ use the Ekeland variational principle and
establish a maximum principle of controlled forward-backward systems, while
the forward state is constrained in a convex set at the terminal time, and
apply the result to state constrained stochastic linear-quadratic control
models and a recursive utility optimization problem are investigated. All the
cited previous works on stochastic control of forward-backward systems are
obtained by introducing two adjoint equations. In the recent works on the
subject, Bahlali and Labed $\left[  3\right]  $ and Bahlali $\left[  6\right]
$ introduce three adjoint equations to establish necessary as well as
sufficient optimality conditions. In $\left[  3\right]  $ the authors
establish the results in the case where the control domain being nonconvex and
uncontrolled diffusion coefficient. The results of $\left[  6\right]  $, are
obtained while the control domain is convex and with controlled diffusion
coefficient, moreover the author apply his theory to solve the financial model
of cash flow valuation.

On the other hand, stochastic maximum principle of backward systems was
studied by El-Karoui et al $\left[  14\right]  $, where the linear case is
solved and some applications in finance are treated. Dokuchaev and Zhou
$\left[  9\right]  $ established necessary as well as sufficient optimality
conditions, where the control domain is not convex.

Our objective in this paper is to establish necessary as well as sufficient
optimality conditions, of the Pontryagin maximum principle type, for two models.

Firstly, we derive necessary as well as sufficient optimality conditions for
strict controls. Since the set of strict controls is nonconvex, the classical
way to use, is the spike variation method. More precisely, if $u$\ is an
optimal strict control and $v$\ is arbitrary, then with a sufficiently small
$\theta>0$, we define a perturbed control as follows%
\[
u_{t}^{\theta}=\left\{
\begin{array}
[c]{l}%
v\text{ \ \ \ if }t\in\left[  \tau,\tau+\theta\right]  ,\\
u_{t}\text{ \ otherwise.}%
\end{array}
\right.
\]

We then derive the variational equation from the state equation, and the
variational inequality from the fact that%
\[
0\leq J\left(  u^{\theta}\right)  -J\left(  u\right)  .
\]

The major difficulty in doing this is that the state of a backward system and
the functional cost depends on two variables $y_{t}$\ and $z_{t}$. Then, we
can't derive directly the variational inequality, because $z_{t}$\ is hard to
handle, there is no convenient pointwise (in $t$) estimation for it, as
opposed to the first variable $y_{t}$. To overcome this difficulty, we
introduce a new method which consist to transform the initial control problem
to a restricted problem without integral cost, by adding an unidimensional
BSDE. We establish then necessary optimality conditions for the restricted
control problem and by an adequate transformation on the adjoint process and
the adjoint equation associated with the restricted problem, we reformulate
necessary optimality conditions for the initial control problem.

To achieve this part of the paper, we study when these necessary optimality
conditions becomes sufficient.

The second main result in this paper concerns necessary as well as sufficient
optimality conditions for relaxed controls. In the relaxed model, the
controller chooses at time $t$ a probability measure $q_{t}\left(  da\right)
$ on the control set $U$, rather than an element $v_{t}$ of $U$. The system is
then governed by the BSDE%
\[
\left\{
\begin{array}
[c]{l}%
dy_{t}^{q}=\int_{U}b\left(  t,y_{t}^{q},z_{t}^{q},a\right)  q_{t}\left(
da\right)  dt+z_{t}^{q}dW_{t},\\
y_{T}^{q}=\xi.
\end{array}
\right.
\]

The criteria to be minimized, over the set $\mathcal{R}$ of relaxed controls,
has the form%
\[
\mathcal{J}\left(  q\right)  =\mathbb{E}\left[  g\left(  y_{0}^{q}\right)
+\int_{0}^{T}\int_{U}h\left(  t,y_{t}^{q},z_{t}^{q},a\right)  q_{t}\left(
da\right)  dt\right]  .
\]

A control $\mu\in\mathcal{R}$ is called optimal if it satisfies%
\[
\mathcal{J}\left(  \mu\right)  =\inf\limits_{q\in\mathcal{R}}\mathcal{J}%
\left(  q\right)  .
\]

The relaxed control problem is an extension of the previous model of strict
controls. Indeed, if $q_{t}\left(  da\right)  =\delta_{v_{t}}\left(
da\right)  $ is a Dirac measure concentrated at a single point $v_{t}$, then
we get a strict control problem as a particular case of the relaxed one.

By using the Ekeland's variational principle, we are able to establish
necessary optimality conditions for near optimal strict controls converging in
some sense to the relaxed optimal control, by the so called chattering lemma.
The relaxed necessary optimality conditions are then derived by using some
stability properties of the trajectories and the adjoint process with respect
to the control variable.

We note that\ necessary optimality conditions for relaxed controls, where the
systems are governed by a stochastic differential equation, were studied by
Mezerdi and Bahlali $\left[  27\right]  $, Bahlali, Djehiche and Mezerdi
$\left[  4\right]  .$

The paper is organized as follows. In Section 2, we formulate the problem and
give the various assumptions used throughout the paper. Section 3 is devoted
to restrict the initial control problem to a problem without integral cost and
we derive a restricted necessary optimality conditions. In Section 4, we give
our first main result, the necessary optimality conditions for the initial
control problem and under additional hypothesis, we prove that these
conditions becomes sufficient. Finally, in the last Section, we give necessary
optimality conditions for near optimal controls and from this we derive our
second main result in this paper, necessary as well as sufficient optimality
conditions for relaxed controls.

\ 

Along this paper, we denote by $C$ some positive constant, $\mathcal{M}%
_{n\times d}\left(  \mathbb{R}\right)  $ the space of $n\times d$ real matrix
and $\mathcal{M}_{n\times n}^{d}\left(  \mathbb{R}\right)  $ the linear space
of vectors $M=\left(  M_{1},...,M_{d}\right)  $ where $M_{i}\in\mathcal{M}%
_{n\times n}\left(  \mathbb{R}\right)  $. We use the standard calculus of
inner and matrix product.

\section{Formulation of the problem}

Let $\left(  \Omega,\mathcal{F},\left(  \mathcal{F}_{t}\right)  _{t\geq
0},\mathcal{P}\right)  $ be a probability space equipped with a filtration
satisfying the usual conditions, on which a $d$-dimensional Brownian motion
$W=\left(  W_{t}\right)  _{t\geq0}$\ is defined. We assume that $\left(
\mathcal{F}_{t}\right)  $ is the $\mathcal{P}$- augmentation of the natural
filtration of $\left(  W_{t}\right)  _{t\geq0}.$

Let $T$ be a strictly positive real number and $U$ a non empty subset of
$\mathbb{R}^{k}$.

\begin{definition}
\textit{An admissible control is an }$\mathcal{F}_{t}$-\textit{\ adapted
process with values in }$U$\textit{\ such that }
\[
\mathbb{E}\left[  \underset{t\in\left[  0,T\right]  }{\sup}\left\vert
v_{t}\right\vert ^{2}\right]  <\infty.
\]

\textit{We denote by }$\mathcal{U}$\textit{\ the set of all admissible
controls.}
\end{definition}

For any $v\in\mathcal{U}$, we consider the following BSDE
\begin{equation}
\left\{
\begin{array}
[c]{l}%
dy_{t}^{v}=b\left(  t,y_{t}^{v},z_{t}^{v},v_{t}\right)  dt+z_{t}^{v}%
\,dW_{t},\\
y_{T}^{v}=\xi,
\end{array}
\right.
\end{equation}
where
\[
b:\left[  0,T\right]  \times\mathbb{R}^{n}\times\mathcal{M}_{n\times d}\left(
\mathbb{R}\right)  \times U\longrightarrow\mathbb{R}^{n},
\]
and $\xi$ is an $n$-dimensional $\mathcal{F}_{T}$-measurable random variable
such that%
\[
\mathbb{E}\left\vert \xi\right\vert ^{2}<\infty.
\]

The expected cost is defined from $\mathcal{U}$ into $\mathbb{R}$ by%
\begin{equation}
J\left(  v\right)  =\mathbb{E}\left[  g\left(  y_{0}^{v}\right)  +%
{\displaystyle\int\nolimits_{0}^{T}}
h\left(  t,y_{t}^{v},z_{t}^{v},v_{t}\right)  dt\right]  ,
\end{equation}
where%
\begin{align*}
g  &  :\mathbb{R}^{n}\longrightarrow\mathbb{R}\text{,}\\
h  &  :\left[  0,T\right]  \times\mathbb{R}^{n}\times\mathcal{M}_{n\times
d}\left(  \mathbb{R}\right)  \times U\longrightarrow\mathbb{R}\text{.}%
\end{align*}

A control $u\in\mathcal{U}$ is called optimal, if that solves
\begin{equation}
J\left(  u\right)  =\inf\limits_{v\in\mathcal{U}}J\left(  v\right)  .
\end{equation}

Our goal is to establish necessary as well as sufficient optimality conditions
for controls in the form of stochastic maximum principle.

\ 

The following assumptions will be in force throughout this paper
\begin{align}
&  \text{The functions }b,g\text{ and }h\ \text{are continuous in }\left(
y,z,v\right)  \text{, they are }\\
&  \text{differentiable with respect to }\left(  y,z\right)  \text{, and they
derivatives }\nonumber\\
&  b_{y},b_{z},g_{y},h_{y}\text{ and }h_{z}\text{ are continuous in }\left(
y,z,v\right)  \text{ and uniformly bounded.}\nonumber\\
&  b\text{ and }h\text{ are bounded by }C\left(  1+\left\vert y\right\vert
+\left\vert v\right\vert \right)  \text{ and bounded in }z.\nonumber
\end{align}

Under the above hypothesis, for every $v\in\mathcal{U}$, equation $\left(
1\right)  $ has a unique strong $\left(  \mathcal{F}_{t}\right)  _{t}$-adapted
solution and the functional cost $J$ is well defined from $\mathcal{U}$ into
$\mathbb{R}$.

\section{Problem with restricted cost}

Since the function $h$\ of the cost depend explicitly on $z_{t}$, we can't
treat our problem directly. Thus,\textbf{\ } let us in this section restrict
the initial control problem $\left\{  \left(  1\right)  ,\left(  2\right)
,\left(  3\right)  \right\}  $ to a problem without integral cost. For this
end, consider the following unidimensional BSDE%
\[
\left\{
\begin{array}
[c]{l}%
dx_{t}^{v}=h\left(  t,y_{t}^{v},z_{t}^{v},v_{t}\right)  dt+k_{t}^{v}dW_{t},\\
x_{T}^{v}=\eta,
\end{array}
\right.
\]
where $k^{v}$ is an $\left(  1\times d\right)  $ matrix, $\left(  y_{t}%
^{v},z_{t}^{v}\right)  $ is the solution of equation $\left(  1\right)  $ and
$\eta$ is an one-dimensional $\mathcal{F}_{T}$-measurable random variable such
that
\[
\mathbb{E}\left\vert \eta\right\vert ^{2}<\infty.
\]

The above equation admits a unique strong $\left(  \mathcal{F}_{t}\right)
_{t} $- adapted solution.

We put%
\[
\widetilde{y}_{t}=\left(
\begin{array}
[c]{c}%
y_{t}^{v}\\
x_{t}^{v}%
\end{array}
\right)  ,
\]
and consider now the following $\left(  n+1\right)  $-dimensional BSDE%
\begin{equation}
\left\{
\begin{array}
[c]{l}%
d\widetilde{y}_{t}=\widetilde{b}\left(  t,\widetilde{y}_{t},\widetilde{z}%
_{t},v_{t}\right)  dt+\widetilde{z}_{t}dW_{t},\\
\widetilde{y}_{T}=\left(
\begin{array}
[c]{c}%
\xi\\
\eta
\end{array}
\right)  ,
\end{array}
\right.
\end{equation}
where the functions $\widetilde{b}$ is defined from $\left[  0,T\right]
\times\mathbb{R}^{n+1}\times\mathcal{M}_{\left(  n+1\right)  \times d}\left(
\mathbb{R}\right)  \times U$ into $\mathbb{R}^{n+1}$ by%
\[
\widetilde{b}\left(  t,\widetilde{y}_{t},\widetilde{z}_{t},v_{t}\right)
=\left(
\begin{array}
[c]{c}%
b\left(  t,y_{t}^{v},z_{t}^{v},v_{t}\right) \\
h\left(  t,y_{t}^{v},z_{t}^{v},v_{t}\right)
\end{array}
\right)  ,
\]
and $\widetilde{z}_{t}$ is a $\left(  n+1\right)  \times d$ real matrix given
by%
\[
\widetilde{z}_{t}=\left(
\begin{array}
[c]{c}%
z_{t}^{v}\\
k_{t}^{v}%
\end{array}
\right)  =\left(
\begin{array}
[c]{c}%
z_{11}^{v}\ \ z_{12}^{v}\ ...\ z_{1d}^{v}\\
z_{21}^{v}\ \ z_{22}^{v}\ ...\ z_{2d}^{v}\\
\vdots\ \ \ \ \ \ \ \ \ \ \ \ \ \ \ \ \vdots\\
z_{n1}^{v}\ \ z_{n2}^{v}\ ...\ z_{nd}^{v}\\
k_{1}^{v}\ \ k_{2}^{v}\ ...\ k_{d}^{v}%
\end{array}
\right)  .
\]

From $\left(  4\right)  $, $\widetilde{b}$ is uniformly Lipschitz in $\left(
\widetilde{y}_{t},\widetilde{z}_{t}\right)  $, then equation $\left(
1\right)  $ admits a unique strong solution $\left(  \widetilde{y}%
_{t},\widetilde{z}_{t}\right)  $ adapted to the filtration $\left(
\mathcal{F}_{t}\right)  _{t}$.

\ 

Define now the function $\widetilde{g}$ from $\mathbb{R}^{n+1}$ into
$\mathbb{R}$ by%
\[
\widetilde{g}\left(  \widetilde{y}_{t}\right)  =g\left(  y_{t}^{v}\right)
-x_{t}^{v},
\]
and the new functional cost from $\mathcal{U}$ into $\mathbb{R}$ by
\begin{equation}
\widetilde{J}\left(  v\right)  =\mathbb{E}\left[  \widetilde{g}\left(
\widetilde{y}_{0}\right)  \right]  +\mathbb{E}\left[  \eta\right]  .
\end{equation}

It's easy to see that
\[
\widetilde{J}\left(  v\right)  =J\left(  v\right)  .
\]

Consequently, it's sufficient to minimize the restricted cost $\widetilde{J}$
over $\mathcal{U}$. If $u\in\mathcal{U}$ is an optimal solution, that is
\begin{equation}
\widetilde{J}\left(  u\right)  =\inf\limits_{v\in\mathcal{U}}\widetilde
{J}\left(  v\right)  .
\end{equation}

From this transformation, we have reduce our initial problem $\left\{  \left(
1\right)  ,\left(  2\right)  ,\left(  3\right)  \right\}  $ to a new problem
without integral cost. We can now study the restricted problem $\left\{
\left(  5\right)  ,\left(  6\right)  ,\left(  7\right)  \right\}  $ by using a
classical way of spike variation method. We establish necessary optimality
conditions for a restricted problem and by an adequate transformation on the
adjoint process and the adjoint equation associated with the restricted
problem, we reformulate necessary optimality conditions for the initial
control problem $\left\{  \left(  1\right)  ,\left(  2\right)  ,\left(
3\right)  \right\}  $.

\subsection{Preliminary results}

Suppose that $u\in\mathcal{U}$ is an optimal control and denote by $\left(
\widetilde{y}_{t},\widetilde{z}_{t}\right)  $ the solution of $\left(
5\right)  $ corresponding to $u$. Introduce the following perturbation (spike
variation) of the optimal control $u$%
\begin{equation}
u_{t}^{\theta}=\left\{
\begin{array}
[c]{l}%
v\text{ \ \ \ if }t\in\left[  \tau,\tau+\theta\right]  ,\\
u_{t}\text{\ \ otherwise,}%
\end{array}
\right.
\end{equation}
where $0\leq\tau\leq T$ is fixed, $\theta>0$ is sufficiently small and $v$ is
an arbitrary $\mathcal{F}_{t}$-measurable random variable with values in $U$
such that $\mathbb{E}\left[  \left\vert v\right\vert ^{2}\right]  <\infty$.

The control $u^{\theta}$ is admissible and let $\left(  \widetilde{y}%
_{t}^{\theta},\widetilde{z}_{t}^{\theta}\right)  $\ be the solution of
$\left(  5\right)  $ associated with $u_{t}^{\theta}.$

Since $u$ is optimal, the variational inequality will be derived from the fact
that
\begin{equation}
0\leq\widetilde{J}\left(  u^{\theta}\right)  -\widetilde{J}\left(  u\right)  .
\end{equation}

For this end, we need the following lemmas.

\begin{lemma}
\textit{Under assumptions }$\left(  4\right)  $\textit{, we have}%
\begin{align}
\mathbb{E}\left[  \underset{t\in\left[  0,T\right]  }{\sup}\left\vert
\widetilde{y}_{t}^{\theta}-\widetilde{y}_{t}\right\vert ^{2}\right]   &  \leq
C\theta^{2},\\
\mathbb{E}%
{\displaystyle\int\nolimits_{0}^{T}}
\left\vert \widetilde{z}_{t}^{\theta}-\widetilde{z}_{t}\right\vert ^{2}dt  &
\leq C\theta^{2}.
\end{align}

\end{lemma}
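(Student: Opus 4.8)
The plan is to write a BSDE for the difference and run a backward energy estimate, taking care to extract the full power $\theta^{2}$ rather than the naive $\theta$. Set $\Delta y_{t}=\widetilde{y}_{t}^{\theta}-\widetilde{y}_{t}$ and $\Delta z_{t}=\widetilde{z}_{t}^{\theta}-\widetilde{z}_{t}$. Subtracting the two copies of equation $\left(5\right)$, the pair $\left(\Delta y,\Delta z\right)$ solves a BSDE with zero terminal value and drift $f_{t}=\widetilde{b}(t,\widetilde{y}_{t}^{\theta},\widetilde{z}_{t}^{\theta},u_{t}^{\theta})-\widetilde{b}(t,\widetilde{y}_{t},\widetilde{z}_{t},u_{t})$. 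I would split this as $f_{t}=\ell_{t}+\rho_{t}$, where $\ell_{t}=\widetilde{b}(t,\widetilde{y}_{t}^{\theta},\widetilde{z}_{t}^{\theta},u_{t}^{\theta})-\widetilde{b}(t,\widetilde{y}_{t},\widetilde{z}_{t},u_{t}^{\theta})$ and $\rho_{t}=\widetilde{b}(t,\widetilde{y}_{t},\widetilde{z}_{t},u_{t}^{\theta})-\widetilde{b}(t,\widetilde{y}_{t},\widetilde{z}_{t},u_{t})$. By the uniform Lipschitz property of $\widetilde{b}$ established from $\left(4\right)$, one has $|\ell_{t}|\leq C(|\Delta y_{t}|+|\Delta z_{t}|)$, while $\rho_{t}$ is supported on $\left[\tau,\tau+\theta\right]$ since $u^{\theta}$ and $u$ agree off that interval. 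The growth bound in $\left(4\right)$ gives $|\rho_{t}|\leq C(1+|\widetilde{y}_{t}|+|v|+|u_{t}|)$ there, so by the square-integrability of $\widetilde{y}$, of $v$, and of $u$, I get $\mathbb{E}\int_{\tau}^{\tau+\theta}|\rho_{t}|^{2}\,dt\leq C\theta$.

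Next I would apply It\^{o}'s formula to $|\Delta y_{t}|^{2}$ between $t$ and $T$. Using $\Delta y_{T}=0$ and taking expectations kills the martingale term and yields $\mathbb{E}|\Delta y_{t}|^{2}+\mathbb{E}\int_{t}^{T}|\Delta z_{s}|^{2}\,ds=-2\mathbb{E}\int_{t}^{T}\langle\Delta y_{s},f_{s}\rangle\,ds$. The Lipschitz part is handled by Young's inequality, $2|\langle\Delta y_{s},\ell_{s}\rangle|\leq C|\Delta y_{s}|^{2}+\tfrac{1}{2}|\Delta z_{s}|^{2}$, so that the term $\tfrac{1}{2}\int|\Delta z|^{2}$ is absorbed on the left. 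The crucial point is the treatment of the source term: rather than bounding it by $\tfrac{1}{2}\int|\Delta y|^{2}+\tfrac{1}{2}\int|\rho|^{2}$ (which would only give order $\theta$), I keep it as $2\sup_{s}|\Delta y_{s}|\int_{\tau}^{\tau+\theta}|\rho_{s}|\,ds$. Writing $\phi(t)=\mathbb{E}|\Delta y_{t}|^{2}$ and $S=\mathbb{E}\sup_{t}|\Delta y_{t}|^{2}$, Cauchy--Schwarz in time together with the bound on $\rho$ gives $\mathbb{E}[\sup_{s}|\Delta y_{s}|\int_{\tau}^{\tau+\theta}|\rho_{s}|\,ds]\leq C\theta\,S^{1/2}$. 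A backward Gronwall argument then produces $\phi(t)\leq C\theta\,S^{1/2}$ uniformly in $t$, and the energy identity at $t=0$ gives $\mathbb{E}\int_{0}^{T}|\Delta z_{s}|^{2}\,ds\leq C\theta\,S^{1/2}$.

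Finally I would upgrade to the supremum estimate. Returning to the pathwise identity before taking expectations and taking $\sup_{t}$, the Burkholder--Davis--Gundy inequality controls the martingale term by $\tfrac{1}{4}S+C\,\mathbb{E}\int_{0}^{T}|\Delta z_{s}|^{2}\,ds$. Collecting all contributions yields the self-referential inequality $S\leq C\theta\,S^{1/2}+\tfrac{1}{4}S$, whence $S^{1/2}\leq C\theta$, that is $\mathbb{E}\sup_{t}|\Delta y_{t}|^{2}\leq C\theta^{2}$, which is $\left(10\right)$. Substituting this back into the bound for the $\Delta z$ term gives $\mathbb{E}\int_{0}^{T}|\Delta z_{s}|^{2}\,ds\leq C\theta\cdot C\theta=C\theta^{2}$, which is $\left(11\right)$.

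I expect the main obstacle to be precisely the extraction of the quadratic power $\theta^{2}$. A direct energy estimate for the difference BSDE produces only the linear bound $C\theta$, because the spike source $\rho$ lives on a set of length $\theta$ and contributes $\mathbb{E}\int|\rho|^{2}\sim\theta$. The gain of the extra factor comes from treating the source as the product $\sup|\Delta y|\cdot\int_{\tau}^{\tau+\theta}|\rho|\,ds$ and then closing the resulting inequality in $S$, a self-improving argument; one must also carefully verify the integrability of the control $u$, the perturbation $v$, and the state $\widetilde{y}$ that underlies the estimate $\mathbb{E}\int_{\tau}^{\tau+\theta}|\rho|^{2}\leq C\theta$.
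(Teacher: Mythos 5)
Your proposal is correct, and it reaches the estimate by the same initial reduction as the paper — forming the difference BSDE with zero terminal value and splitting the driver into a Lipschitz part plus a spike part supported on $\left[\tau,\tau+\theta\right]$ — but it diverges at the key step. The paper does not carry out the energy estimate by hand: it observes that $\left(Y^{\theta},Z^{\theta}\right)$ solves a linear BSDE with bounded coefficients and zero terminal condition, and then invokes the a priori estimate of Briand et al (their Proposition 3.2), which bounds $\mathbb{E}\bigl[\sup_{t}|Y_{t}^{\theta}|^{2}+\int_{0}^{T}|Z_{t}^{\theta}|^{2}dt\bigr]$ by $C\,\mathbb{E}\bigl|\int_{0}^{T}|\varphi^{\theta}(t,0,0)|\,dt\bigr|^{2}$; since $\varphi^{\theta}(t,0,0)$ is exactly the spike term, vanishing off an interval of length $\theta$, the square of the time-$L^{1}$ norm immediately yields $C\theta^{2}$. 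What you have done is reprove that a priori estimate in this special case: your insistence on pairing the source with $\sup_{s}|\Delta y_{s}|$ and keeping it as an $L^{1}(dt)$ quantity, followed by the self-improving inequality $S\leq C\theta S^{1/2}+\tfrac{1}{4}S$, is precisely the mechanism that makes the $L^{1}$-norm (rather than the $L^{2}$-norm, which would only give $\theta$) appear squared on the right-hand side of the cited result. The paper's route is shorter and rests on a general result valid in $L^{p}$ settings; yours is self-contained and makes completely explicit where the quadratic power of $\theta$ comes from, which the paper's citation leaves hidden. Two minor points to make your write-up airtight: state at the outset that $S=\mathbb{E}\sup_{t}|\Delta y_{t}|^{2}<\infty$ (a standard consequence of the existence theory for equation $\left(5\right)$), since you divide by $S^{1/2}$ at the end; and note that the integrability you use for the spike term — $\mathbb{E}\sup_{t}|\widetilde{y}_{t}|^{2}<\infty$, $\mathbb{E}|v|^{2}<\infty$, $\mathbb{E}\sup_{t}|u_{t}|^{2}<\infty$ — comes respectively from the BSDE estimates, the definition of the perturbation in $\left(8\right)$, and the definition of admissible controls.
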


\begin{proof}
We have%
\[
\left\{
\begin{array}
[c]{ll}%
d\left(  \widetilde{y}_{t}^{\theta}-\widetilde{y}_{t}\right)  = & \left[
\widetilde{b}\left(  t,\widetilde{y}_{t}^{\theta},\widetilde{z}_{t}^{\theta
},u_{t}^{\theta}\right)  -\widetilde{b}\left(  t,\widetilde{y}_{t}%
,\widetilde{z}_{t}^{\theta},u_{t}^{\theta}\right)  \right]  dt\\
& \left[  \widetilde{b}\left(  t,\widetilde{y}_{t},\widetilde{z}_{t}^{\theta
},u_{t}^{\theta}\right)  -\widetilde{b}\left(  t,\widetilde{y}_{t}%
,\widetilde{z}_{t},u_{t}^{\theta}\right)  \right]  dt\\
& \left[  \widetilde{b}\left(  t,\widetilde{y}_{t},\widetilde{z}_{t}%
,u_{t}^{\theta}\right)  -\widetilde{b}\left(  t,\widetilde{y}_{t}%
,\widetilde{z}_{t},u_{t}\right)  \right]  dt\\
& +\left(  \widetilde{z}_{t}^{\theta}-\widetilde{z}_{t}\right)  dW_{t},\\
\left(  \widetilde{y}_{T}^{\theta}-\widetilde{y}_{T}\right)  = & 0.
\end{array}
\right.
\]

Put%
\begin{align*}
Y_{t}^{\theta}  &  =\widetilde{y}_{t}^{\theta}-\widetilde{y}_{t},\\
Z_{t}^{\theta}  &  =\widetilde{z}_{t}^{\theta}-\widetilde{z}_{t},
\end{align*}
and%
\begin{align}
\varphi^{\theta}\left(  t,Y_{t}^{\theta},Z_{t}^{\theta}\right)   &  =%
{\displaystyle\int\nolimits_{0}^{1}}
\widetilde{b}_{y}\left(  t,\widetilde{y}_{t}+\lambda\left(  \widetilde{y}%
_{t}^{\theta}-\widetilde{y}_{t}\right)  ,\widetilde{z}_{t}+\lambda\left(
\widetilde{z}_{t}^{\theta}-\widetilde{z}_{t}\right)  ,u_{t}^{\theta}\right)
Y_{t}^{\theta}d\lambda\\
&  +%
{\displaystyle\int\nolimits_{0}^{1}}
\widetilde{b}_{z}\left(  t,\widetilde{y}_{t}+\lambda\left(  \widetilde{y}%
_{t}^{\theta}-\widetilde{y}_{t}\right)  ,\widetilde{z}_{t}+\lambda\left(
\widetilde{z}_{t}^{\theta}-\widetilde{z}_{t}\right)  ,u_{t}^{\theta}\right)
Z_{t}^{\theta}d\lambda\nonumber\\
&  +\widetilde{b}\left(  t,\widetilde{y}_{t},\widetilde{z}_{t},u_{t}^{\theta
}\right)  -\widetilde{b}\left(  t,\widetilde{y}_{t},\widetilde{z}_{t}%
,u_{t}\right)  .\nonumber
\end{align}

Then%
\begin{equation}
\left\{
\begin{array}
[c]{l}%
dY_{t}^{\theta}=\varphi^{\theta}\left(  t,Y_{t}^{\theta},Z_{t}^{\theta
}\right)  dt+Z_{t}^{\theta}dW_{t},\\
Y_{T}^{\theta}=0.
\end{array}
\right.
\end{equation}

The above equation is a linear BSDE with bounded coefficients and with
terminal condition $Y_{T}^{\theta}=0$. Then by applying a priori estimates
(see Briand et al $\left[  8,\ \text{Proposition 3.2, Page 7}\right]  $), we
get%
\[
\mathbb{E}\left[  \underset{t\in\left[  0,T\right]  }{\sup}\left\vert
Y_{t}^{\theta}\right\vert ^{2}+%
{\displaystyle\int\nolimits_{0}^{T}}
\left\vert Z_{t}^{\theta}\right\vert ^{2}dt\right]  \leq C\mathbb{E}%
\left\vert
{\displaystyle\int\nolimits_{0}^{T}}
\left\vert \varphi^{\theta}\left(  t,0,0\right)  \right\vert dt\right\vert
^{2}.
\]

From $\left(  12\right)  $, we get%
\[
\mathbb{E}\left[  \underset{t\in\left[  0,T\right]  }{\sup}\left\vert
Y_{t}^{\theta}\right\vert ^{2}+%
{\displaystyle\int\nolimits_{0}^{T}}
\left\vert Z_{t}^{\theta}\right\vert ^{2}dt\right]  \leq C\mathbb{E}%
\left\vert
{\displaystyle\int\nolimits_{0}^{T}}
\left\vert \widetilde{b}\left(  t,\widetilde{y}_{t},\widetilde{z}_{t}%
,u_{t}^{\theta}\right)  -\widetilde{b}\left(  t,\widetilde{y}_{t}%
,\widetilde{z}_{t},u_{t}\right)  \right\vert dt\right\vert ^{2}.
\]

By the definition of $u^{\theta}$, we have%
\begin{align*}
\mathbb{E}\left[  \underset{t\in\left[  0,T\right]  }{\sup}\left\vert
Y_{t}^{\theta}\right\vert ^{2}+%
{\displaystyle\int\nolimits_{0}^{T}}
\left\vert Z_{t}^{\theta}\right\vert ^{2}dt\right]   &  \leq C\mathbb{E}%
\left\vert
{\displaystyle\int\nolimits_{\tau}^{\tau+\theta}}
\left\vert \widetilde{b}\left(  t,\widetilde{y}_{t},\widetilde{z}%
_{t},v\right)  -\widetilde{b}\left(  t,\widetilde{y}_{t},\widetilde{z}%
_{t},u_{t}\right)  \right\vert dt\right\vert ^{2}\\
&  \leq C\mathbb{E}\left\vert \underset{t\in\left[  0,T\right]  }{\sup
}\left\vert \widetilde{b}\left(  t,\widetilde{y}_{t},\widetilde{z}%
_{t},v\right)  -\widetilde{b}\left(  t,\widetilde{y}_{t},\widetilde{z}%
_{t},u_{t}\right)  \right\vert
{\displaystyle\int\nolimits_{\tau}^{\tau+\theta}}
dt\right\vert ^{2}.
\end{align*}

By $\left(  4\right)  $, $b$ is with linear growth with respect to $\left(
y,v\right)  $ and bounded in $z$, then $\widetilde{b}$ satisfy the same
properties, and we get%
\[
\mathbb{E}\left[  \underset{t\in\left[  0,T\right]  }{\sup}\left\vert
Y_{t}^{\theta}\right\vert ^{2}+%
{\displaystyle\int\nolimits_{0}^{T}}
\left\vert Z_{t}^{\theta}\right\vert ^{2}dt\right]  \leq C\mathbb{\theta}%
^{2}.
\]

The lemma is proved.
\end{proof}

\subsection{Necessary optimality conditions for restricted problem}

We can now state necessary optimality conditions for a restricted control
problem $\left\{  \left(  5\right)  ,\left(  6\right)  ,\left(  7\right)
\right\}  . $

\begin{theorem}
(necessary optimality conditions for restricted problem) \textit{Let }$\left(
u,\widetilde{y},\widetilde{z}\right)  $\textit{\ be an optimal solution of the
restricted control problem }$\left\{  \left(  5\right)  ,\left(  6\right)
,\left(  7\right)  \right\}  $\textit{. Then there exists a unique adapted
process}
\[
\widetilde{p}\in\mathcal{L}^{2}\left(  \left[  0,T\right]  ;\mathbb{R}%
^{n+1}\right)  ,
\]
\textit{which is solution of the following forward stochastic differential
equation}%
\begin{equation}
\left\{
\begin{array}
[c]{l}%
-d\widetilde{p}_{t}=\widetilde{H}_{y}\left(  t,\widetilde{y}_{t},\widetilde
{z}_{t},\widetilde{p}_{t},u_{t}\right)  dt+\widetilde{H}_{z}\left(
t,\widetilde{y}_{t},\widetilde{z}_{t},\widetilde{p}_{t},u_{t}\right)
dW_{t},\\
\widetilde{p}_{0}=\widetilde{g}_{y}\left(  \widetilde{y}_{0}\right)  ,
\end{array}
\right.
\end{equation}
\textit{such that }
\begin{equation}
\widetilde{H}\left(  t,\widetilde{y}_{t},\widetilde{z}_{t},\widetilde{p}%
_{t},u_{t}\right)  =\underset{v\in U}{\max}\widetilde{H}\left(  t,\widetilde
{y}_{t},\widetilde{z}_{t},\widetilde{p}_{t},v\right)  \;;\;a.e\;,\;a.s,
\end{equation}
where the Hamiltonian $\widetilde{H}$ is defined from $\left[  0,T\right]
\times\mathbb{R}^{n+1}\times\mathcal{M}_{\left(  n+1\right)  \times d}\left(
\mathbb{R}\right)  \times\mathbb{R}^{n+1}\times U$ into $\mathbb{R}$ by%
\[
\widetilde{H}\left(  t,\widetilde{y}_{t},\widetilde{z}_{t},\widetilde{p}%
_{t},u_{t}\right)  =\widetilde{b}\left(  t,\widetilde{y}_{t},\widetilde{z}%
_{t},u_{t}\right)  \widetilde{p}_{t}.
\]

\end{theorem}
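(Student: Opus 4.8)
The plan is to run the classical spike--variation scheme on the restricted problem $\{(5),(6),(7)\}$, taking advantage of the fact that the restricted cost $\widetilde{J}(v)=\mathbb{E}[\widetilde{g}(\widetilde{y}_0)]+\mathbb{E}[\eta]$ involves the control only through the single value $\widetilde{y}_0$, the term $\mathbb{E}[\eta]$ being control--independent; this is exactly what removes the need for any pointwise estimate on $\widetilde{z}$ in the cost. I would first settle existence and uniqueness of $\widetilde{p}$. Since $\widetilde{H}(t,\widetilde{y},\widetilde{z},\widetilde{p},u)=\langle\widetilde{b}(t,\widetilde{y},\widetilde{z},u),\widetilde{p}\rangle$, the gradients $\widetilde{H}_y=(\widetilde{b}_y)^{*}\widetilde{p}$ and $\widetilde{H}_z=(\widetilde{b}_z)^{*}\widetilde{p}$ are linear in $\widetilde{p}$, so that $(13)$ is the linear \emph{forward} stochastic differential equation $d\widetilde{p}_t=-(\widetilde{b}_y)^{*}\widetilde{p}_t\,dt-(\widetilde{b}_z)^{*}\widetilde{p}_t\,dW_t$ with $\widetilde{p}_0=\widetilde{g}_y(\widetilde{y}_0)$. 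Its coefficients are bounded by $(4)$, so the standard theory of linear SDEs furnishes a unique solution $\widetilde{p}\in\mathcal{L}^2([0,T];\mathbb{R}^{n+1})$.

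Next comes the variational equation. I introduce the first--order variational process $(Y_t,Z_t)$, solution of the linear BSDE with terminal value $Y_T=0$ and driver $\widetilde{b}_y(t)\,Y_t+\widetilde{b}_z(t)\,Z_t+[\widetilde{b}(t,\widetilde{y}_t,\widetilde{z}_t,v)-\widetilde{b}(t,\widetilde{y}_t,\widetilde{z}_t,u_t)]\,\mathbf{1}_{[\tau,\tau+\theta]}(t)$, the derivatives being evaluated along the optimal trajectory $(\widetilde{y},\widetilde{z})$. Expanding $\widetilde{b}$ by the fundamental theorem of calculus as in the proof of Lemma~2 and subtracting this linear equation, the a priori estimates $(10)$--$(11)$ give $\mathbb{E}[\sup_{t}|\widetilde{y}_t^{\theta}-\widetilde{y}_t-Y_t|^{2}]=o(\theta^{2})$, hence $\widetilde{y}_0^{\theta}-\widetilde{y}_0=Y_0+o(\theta)$ in $L^{2}$. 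A first--order Taylor expansion of $\widetilde{g}$ at $\widetilde{y}_0$, the boundedness of $\widetilde{g}_y$, and the optimality inequality $(9)$ (recalling that $\mathbb{E}[\eta]$ cancels) then yield the variational inequality $0\le\mathbb{E}[\langle\widetilde{g}_y(\widetilde{y}_0),Y_0\rangle]+o(\theta)$.

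The heart of the proof is the duality relation. I apply It\^o's formula to $\langle\widetilde{p}_t,Y_t\rangle$ on $[0,T]$. By the very definition of $\widetilde{H}_y$ and $\widetilde{H}_z$, the drift terms $\langle\widetilde{p},\widetilde{b}_yY\rangle$ and $\langle\widetilde{p},\widetilde{b}_zZ\rangle$ cancel against $-\langle\widetilde{H}_y,Y\rangle$ and the cross--variation term $-\langle\widetilde{H}_z,Z\rangle$; using $Y_T=0$ and $\widetilde{p}_0=\widetilde{g}_y(\widetilde{y}_0)$ and taking expectations (the martingale parts vanish) leaves
\[
\mathbb{E}[\langle\widetilde{g}_y(\widetilde{y}_0),Y_0\rangle]=-\mathbb{E}\int_{\tau}^{\tau+\theta}\langle\widetilde{p}_t,\widetilde{b}(t,\widetilde{y}_t,\widetilde{z}_t,v)-\widetilde{b}(t,\widetilde{y}_t,\widetilde{z}_t,u_t)\rangle\,dt.
\]
Combined with the variational inequality this reads $\mathbb{E}\int_{\tau}^{\tau+\theta}[\widetilde{H}(t,\widetilde{y}_t,\widetilde{z}_t,\widetilde{p}_t,v)-\widetilde{H}(t,\widetilde{y}_t,\widetilde{z}_t,\widetilde{p}_t,u_t)]\,dt\le o(\theta)$. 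Dividing by $\theta$ and letting $\theta\downarrow0$, the Lebesgue differentiation theorem gives, for a.e.\ $\tau$, $\mathbb{E}[\widetilde{H}(\tau,\widetilde{y}_\tau,\widetilde{z}_\tau,\widetilde{p}_\tau,v)]\le\mathbb{E}[\widetilde{H}(\tau,\widetilde{y}_\tau,\widetilde{z}_\tau,\widetilde{p}_\tau,u_\tau)]$; testing with $v=w\,\mathbf{1}_{A}+u_\tau\,\mathbf{1}_{A^{c}}$ for arbitrary $A\in\mathcal{F}_\tau$ and $w\in U$ (a measurable--selection argument) upgrades this to the pointwise maximum condition $(14)$.

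The step I expect to be the main obstacle is the remainder estimate $\mathbb{E}[\sup_t|\widetilde{y}_t^{\theta}-\widetilde{y}_t-Y_t|^2]=o(\theta^{2})$ for the variational equation. Because the spike perturbs $\widetilde{b}$ and $\widetilde{b}$ genuinely depends on $\widetilde{z}$, I must control the $\widetilde{b}_z(\widetilde{z}^{\theta}-\widetilde{z})$ contribution with no pointwise bound on $\widetilde{z}$ available. This is precisely where estimate $(11)$, namely $\mathbb{E}\int_0^T|\widetilde{z}_t^{\theta}-\widetilde{z}_t|^2\,dt\le C\theta^2$, together with the uniform boundedness and continuity of $\widetilde{b}_z$ from $(4)$ and the linear a priori estimates for BSDEs, becomes indispensable; it lets me bound the remainder in the $L^2$ norm and justify the linearization. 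Once this is in hand, the duality cancellation and the passage to the pointwise maximum are routine.
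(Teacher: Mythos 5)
Your proposal is correct in outline and shares the paper's skeleton: spike variation, the adjoint process solving a linear \emph{forward} SDE with bounded coefficients, It\^o duality, division by $\theta$ with Lebesgue differentiation, and finally the conditioning argument with $v=w\mathbf{1}_{A}+u\mathbf{1}_{A^{c}}$ to pass from the averaged to the pointwise maximum condition. The structural difference is in the error analysis. The paper never introduces a first-order variational BSDE: it applies It\^o's formula directly to $\widetilde{p}_{t}\left(  \widetilde{y}_{t}^{\theta}-\widetilde{y}_{t}\right)  $, so the Hamiltonian increment appears at once, and the price is the remainder terms $\left(  18\right)  $--$\left(  19\right)  $, which are \emph{linear} in the increments $\widetilde{y}^{\theta}-\widetilde{y}$, $\widetilde{z}^{\theta}-\widetilde{z}$ and are killed by a single Cauchy--Schwartz together with Lemma 2. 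Your route buys an exact cancellation in the duality step, but pushes all the difficulty into the expansion $\mathbb{E}\left[  \sup_{t}\left\vert \widetilde{y}_{t}^{\theta}-\widetilde{y}_{t}-Y_{t}\right\vert ^{2}\right]  =o\left(  \theta^{2}\right)  $.

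That expansion is the one place where your cited tools do not quite suffice as stated. The driver of the BSDE satisfied by $e_{t}=\widetilde{y}_{t}^{\theta}-\widetilde{y}_{t}-Y_{t}$ contains the terms $\left[  \int_{0}^{1}\widetilde{b}_{y}\left(  \Lambda_{t}^{\theta}\right)  d\lambda-\widetilde{b}_{y}\left(  t\right)  \right]  Y_{t}$ and $\left[  \int_{0}^{1}\widetilde{b}_{z}\left(  \Lambda_{t}^{\theta}\right)  d\lambda-\widetilde{b}_{z}\left(  t\right)  \right]  Z_{t}$; since $\left(  Y,Z\right)  $ is itself only $O\left(  \theta\right)  $ in $L^{2}$ and the bracketed factors are merely bounded, boundedness plus $\left(  10\right)  $--$\left(  11\right)  $ plus Cauchy--Schwartz give $O\left(  \theta^{2}\right)  $, not $o\left(  \theta^{2}\right)  $. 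To get the little-$o$ you must use that the coefficient increments vanish: on $\left[  \tau,\tau+\theta\right]  $ crude bounds already give order $\theta^{3}$, and off the spike one needs continuity of $\widetilde{b}_{y},\widetilde{b}_{z}$, convergence of the trajectories and dominated convergence --- but played against the $L^{1}$-bounded family $\sup_{t}\left\vert Y_{t}\right\vert ^{2}/\theta^{2}$ this requires a uniform integrability (higher moment) argument, which is not free under assumption $\left(  4\right)  $ where controls are only square integrable. A cleaner repair is to run the duality once more, on $\widetilde{p}_{t}e_{t}$: then the bad products pair the vanishing coefficient increments with the \emph{fixed} process $\widetilde{p}\in\mathcal{L}^{2}$ rather than with $Y,Z$, and dominated convergence gives directly the needed $o\left(  \theta\right)  $ --- which is essentially what the paper does. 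It is only fair to add that the paper's own write-up has the mirror-image weak point: its restriction of the remainder integrals to $\left[  \tau,\tau+\theta\right]  $ ``by the definition of $u^{\theta}$'' is not legitimate, since $\widetilde{y}^{\theta},\widetilde{z}^{\theta}$ differ from $\widetilde{y},\widetilde{z}$ on all of $\left[  0,T\right]  $, and the honest fix is the same dominated convergence argument. So both proofs hinge on the same analytic fact; yours is correct once this single estimate is carried out carefully.
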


\begin{proof}
For simplicit, we put%
\[
\Lambda_{t}^{\theta}=\left(  t,\widetilde{y}_{t}+\lambda\left(  \widetilde
{y}_{t}^{\theta}-\widetilde{y}_{t}\right)  ,\widetilde{z}_{t}+\lambda\left(
\widetilde{z}_{t}^{\theta}-\widetilde{z}_{t}\right)  ,u_{t}^{\theta}\right)
.
\]

Since $u$ minimizes the cost $\widetilde{J}$ over $\mathcal{U}$, then
\end{proof}

\begin{align*}
0  &  \leq\widetilde{J}\left(  u^{\theta}\right)  -\widetilde{J}\left(
u\right) \\
&  \leq\mathbb{E}\left[  \widetilde{g}\left(  \widetilde{y}_{0}^{\theta
}\right)  -\widetilde{g}\left(  \widetilde{y}_{0}\right)  \right] \\
&  \leq\mathbb{E}%
{\displaystyle\int\nolimits_{0}^{1}}
\widetilde{g}_{y}\left[  \widetilde{y}_{0}+\lambda\left(  \widetilde{y}%
_{0}^{\theta}-\widetilde{y}_{0}\right)  \right]  \left(  \widetilde{y}%
_{0}^{\theta}-\widetilde{y}_{0}\right)  d\lambda\\
&  \leq\mathbb{E}\left[  \widetilde{g}_{y}\left(  \widetilde{y}_{0}\right)
\left(  \widetilde{y}_{0}^{\theta}-\widetilde{y}_{0}\right)  \right]
+\mathbb{E}%
{\displaystyle\int\nolimits_{0}^{1}}
\left[  \widetilde{g}_{y}\left(  \widetilde{y}_{0}+\lambda\left(
\widetilde{y}_{0}^{\theta}-\widetilde{y}_{0}\right)  \right)  -\widetilde
{g}_{y}\left(  \widetilde{y}_{0}\right)  \right]  \left(  \widetilde{y}%
_{0}^{\theta}-\widetilde{y}_{0}\right)  d\lambda.
\end{align*}

We remark from $\left(  14\right)  $ that%
\[
\widetilde{p}_{0}=\widetilde{g}_{y}\left(  \widetilde{y}_{0}\right)  .
\]

Then%
\begin{equation}
0\leq\mathbb{E}\left[  \widetilde{p}_{0}\left(  \widetilde{y}_{0}^{\theta
}-\widetilde{y}_{0}\right)  \right]  +\mathbb{E}%
{\displaystyle\int\nolimits_{0}^{1}}
\left[  \widetilde{g}_{y}\left(  \widetilde{y}_{0}+\lambda\left(
\widetilde{y}_{0}^{\theta}-\widetilde{y}_{0}\right)  \right)  -\widetilde
{g}_{y}\left(  \widetilde{y}_{0}\right)  \right]  \left(  \widetilde{y}%
_{0}^{\theta}-\widetilde{y}_{0}\right)  d\lambda.
\end{equation}

By applying It\^{o}'s formula to $\widetilde{p}_{t}\left(  \widetilde{y}%
_{t}^{\theta}-\widetilde{y}_{t}\right)  $, we get%
\begin{align*}
\mathbb{E}\left[  \widetilde{p}_{0}\left(  \widetilde{y}_{0}^{\theta
}-\widetilde{y}_{0}\right)  \right]   &  =\mathbb{E}%
{\displaystyle\int\nolimits_{0}^{T}}
{\displaystyle\int\nolimits_{0}^{1}}
\left[  \widetilde{b}_{y}\left(  \Lambda_{t}^{\theta}\right)  -\widetilde
{b}_{y}\left(  t,\widetilde{y}_{t},\widetilde{z}_{t},u_{t}\right)  \right]
\left(  \widetilde{y}_{t}-\widetilde{y}_{t}^{\theta}\right)  \widetilde{p}%
_{t}d\lambda dt\\
&  +\mathbb{E}%
{\displaystyle\int\nolimits_{0}^{T}}
{\displaystyle\int\nolimits_{0}^{1}}
\left[  \widetilde{b}_{z}\left(  \Lambda_{t}^{\theta}\right)  -\widetilde
{b}_{z}\left(  t,\widetilde{y}_{t},\widetilde{z}_{t},u_{t}\right)  \right]
\left(  \widetilde{z}_{t}-\widetilde{z}_{t}^{\theta}\right)  \widetilde{p}%
_{t}d\lambda dt\\
&  +\mathbb{E}%
{\displaystyle\int\nolimits_{0}^{T}}
\left[  \widetilde{b}\left(  t,\widetilde{y}_{t},\widetilde{z}_{t}%
,u_{t}\right)  -\widetilde{b}\left(  t,\widetilde{y}_{t},\widetilde{z}%
_{t},u_{t}^{\theta}\right)  \right]  \widetilde{p}_{t}dt.
\end{align*}

Then $\left(  16\right)  $ becomes%
\begin{align}
0  &  \leq\mathbb{E}%
{\displaystyle\int\nolimits_{0}^{T}}
\left[  \widetilde{H}\left(  t,\widetilde{y}_{t},\widetilde{z}_{t}%
,\widetilde{p}_{t},u_{t}\right)  -\widetilde{H}\left(  t,\widetilde{y}%
_{t},\widetilde{z}_{t},\widetilde{p}_{t},u_{t}^{\theta}\right)  \right]  dt\\
&  -\mathbb{E}%
{\displaystyle\int\nolimits_{0}^{1}}
\left[  \widetilde{g}_{y}\left(  \widetilde{y}_{0}+\lambda\left(
\widetilde{y}_{0}^{\theta}-\widetilde{y}_{0}\right)  \right)  -\widetilde
{g}_{y}\left(  \widetilde{y}_{0}\right)  \right]  \left(  \widetilde{y}%
_{t}-\widetilde{y}_{t}^{\theta}\right)  d\lambda\nonumber\\
&  +\mathbb{E}%
{\displaystyle\int\nolimits_{0}^{T}}
{\displaystyle\int\nolimits_{0}^{1}}
\left[  \widetilde{b}_{y}\left(  \Lambda_{t}^{\theta}\right)  -\widetilde
{b}_{y}\left(  t,\widetilde{y}_{t},\widetilde{z}_{t},u_{t}\right)  \right]
\left(  \widetilde{y}_{t}-\widetilde{y}_{t}^{\theta}\right)  \widetilde{p}%
_{t}d\lambda dt\nonumber\\
&  +\mathbb{E}%
{\displaystyle\int\nolimits_{0}^{T}}
{\displaystyle\int\nolimits_{0}^{1}}
\left[  \widetilde{b}_{z}\left(  \Lambda_{t}^{\theta}\right)  -\widetilde
{b}_{z}\left(  t,\widetilde{y}_{t},\widetilde{z}_{t},u_{t}\right)  \right]
\left(  \widetilde{z}_{t}-\widetilde{z}_{t}^{\theta}\right)  \widetilde{p}%
_{t}d\lambda dt\nonumber
\end{align}

Let us show that
\begin{equation}
\mathbb{E}%
{\displaystyle\int\nolimits_{0}^{T}}
{\displaystyle\int\nolimits_{0}^{1}}
\left[  \widetilde{b}_{y}\left(  \Lambda_{t}^{\theta}\right)  -\widetilde
{b}_{y}\left(  t,\widetilde{y}_{t},\widetilde{z}_{t},u_{t}\right)  \right]
\left(  \widetilde{y}_{t}-\widetilde{y}_{t}^{\theta}\right)  \widetilde{p}%
_{t}d\lambda dt\leq C\theta^{3/2},
\end{equation}
and%
\begin{equation}
\mathbb{E}%
{\displaystyle\int\nolimits_{0}^{T}}
{\displaystyle\int\nolimits_{0}^{1}}
\left[  \widetilde{b}_{z}\left(  \Lambda_{t}^{\theta}\right)  -\widetilde
{b}_{z}\left(  t,\widetilde{y}_{t},\widetilde{z}_{t},u_{t}\right)  \right]
\left(  \widetilde{z}_{t}-\widetilde{z}_{t}^{\theta}\right)  \widetilde{p}%
_{t}d\lambda dt\leq C\theta^{3/2}.
\end{equation}

Indeed, by using the Cauchy-Schwartz inequality to term in the left hand side
of $\left(  19\right)  $, we get%
\begin{align*}
&  \mathbb{E}%
{\displaystyle\int\nolimits_{0}^{T}}
{\displaystyle\int\nolimits_{0}^{1}}
\left[  \widetilde{b}_{z}\left(  \Lambda_{t}^{\theta}\right)  -\widetilde
{b}_{z}\left(  t,\widetilde{y}_{t},\widetilde{z}_{t},u_{t}\right)  \right]
\left(  \widetilde{z}_{t}-\widetilde{z}_{t}^{\theta}\right)  \widetilde{p}%
_{t}d\lambda dt\\
&  \leq\left(  \mathbb{E}%
{\displaystyle\int\nolimits_{0}^{T}}
{\displaystyle\int\nolimits_{0}^{1}}
\left\vert \left[  \widetilde{b}_{z}\left(  \Lambda_{t}^{\theta}\right)
-\widetilde{b}_{z}\left(  t,\widetilde{y}_{t},\widetilde{z}_{t},u_{t}\right)
\right]  \widetilde{p}_{t}\right\vert ^{2}d\lambda dt\right)  ^{1/2}\left(
\mathbb{E}%
{\displaystyle\int\nolimits_{0}^{T}}
\left\vert \widetilde{z}_{t}-\widetilde{z}_{t}^{\theta}\right\vert
^{2}dt\right)  ^{1/2}.
\end{align*}

By $\left(  11\right)  $, we obtain
\begin{align*}
&  \mathbb{E}%
{\displaystyle\int\nolimits_{0}^{T}}
{\displaystyle\int\nolimits_{0}^{1}}
\left[  \widetilde{b}_{z}\left(  \Lambda_{t}^{\theta}\right)  -\widetilde
{b}_{z}\left(  t,\widetilde{y}_{t},\widetilde{z}_{t},u_{t}\right)  \right]
\left(  \widetilde{z}_{t}-\widetilde{z}_{t}^{\theta}\right)  \widetilde{p}%
_{t}d\lambda dt\\
&  \leq C\theta\left(  \mathbb{E}%
{\displaystyle\int\nolimits_{0}^{T}}
{\displaystyle\int\nolimits_{0}^{1}}
\left\vert \left[  \widetilde{b}_{z}\left(  \Lambda_{t}^{\theta}\right)
-\widetilde{b}_{z}\left(  t,\widetilde{y}_{t},\widetilde{z}_{t},u_{t}\right)
\right]  \widetilde{p}_{t}\right\vert ^{2}d\lambda dt\right)  ^{1/2}.
\end{align*}

By the definition of $u^{\theta}$, we have%
\begin{align*}
&  \mathbb{E}%
{\displaystyle\int\nolimits_{0}^{T}}
{\displaystyle\int\nolimits_{0}^{1}}
\left[  \widetilde{b}_{z}\left(  \Lambda_{t}^{\theta}\right)  -\widetilde
{b}_{z}\left(  t,\widetilde{y}_{t},\widetilde{z}_{t},u_{t}\right)  \right]
\left(  \widetilde{z}_{t}-\widetilde{z}_{t}^{\theta}\right)  \widetilde{p}%
_{t}d\lambda dt\\
&  \leq C\theta\left(  \mathbb{E}%
{\displaystyle\int\nolimits_{\tau}^{\tau+\theta}}
{\displaystyle\int\nolimits_{0}^{1}}
\left\vert \left[  \widetilde{b}_{z}\left(  \Lambda_{t}^{v}\right)
-\widetilde{b}_{z}\left(  t,\widetilde{y}_{t},\widetilde{z}_{t},u_{t}\right)
\widetilde{p}_{t}\right]  \right\vert ^{2}d\lambda dt\right)  ^{1/2}.
\end{align*}

Since $\widetilde{b}_{y}$ is bounded, we get%
\begin{align*}
&  \mathbb{E}%
{\displaystyle\int\nolimits_{0}^{T}}
{\displaystyle\int\nolimits_{0}^{1}}
\left[  \widetilde{b}_{z}\left(  \Lambda_{t}^{\theta}\right)  -\widetilde
{b}_{z}\left(  t,\widetilde{y}_{t},\widetilde{z}_{t},u_{t}\right)  \right]
\left(  \widetilde{z}_{t}^{\theta}-\widetilde{z}_{t}\right)  \widetilde{p}%
_{t}d\lambda dt\\
&  \leq C\theta\left(
{\displaystyle\int\nolimits_{\tau}^{\tau+\theta}}
\mathbb{E}\left\vert \widetilde{p}\right\vert ^{2}dt\right)  ^{1/2}.
\end{align*}

Since $\widetilde{p}\in\mathcal{L}^{2}\left(  \left[  0,T\right]
;\mathbb{R}^{n+1}\right)  ,$ we obtain%
\begin{align*}
&  \mathbb{E}%
{\displaystyle\int\nolimits_{0}^{T}}
{\displaystyle\int\nolimits_{0}^{1}}
\left[  \widetilde{b}_{z}\left(  \Lambda_{t}^{\theta}\right)  -\widetilde
{b}_{z}\left(  t,\widetilde{y}_{t},\widetilde{z}_{t},u_{t}\right)  \right]
\left(  \widetilde{z}_{t}^{\theta}-\widetilde{z}_{t}\right)  \widetilde{p}%
_{t}d\lambda dt\\
&  \leq\left(  C%
{\displaystyle\int\nolimits_{\tau}^{\tau+\theta}}
dt\right)  ^{1/2}C\theta=C\theta^{3/2}.
\end{align*}

Relation $\left(  19\right)  $ is proved.

$\left(  18\right)  $ is proved by the same method and by using $\left(
10\right)  $ and the fact that $b_{y}$ is bounded.

\ 

Now, by $\left(  17\right)  $, $\left(  18\right)  $ and $\left(  19\right)  $
we get
\begin{align*}
0  &  \leq\mathbb{E}%
{\displaystyle\int\nolimits_{0}^{T}}
\left[  \widetilde{H}\left(  t,\widetilde{y}_{t},\widetilde{z}_{t}%
,\widetilde{p}_{t},u_{t}\right)  -\widetilde{H}\left(  t,\widetilde{y}%
_{t},\widetilde{z}_{t},\widetilde{p}_{t},u_{t}^{\theta}\right)  \right]  dt\\
&  +\mathbb{E}%
{\displaystyle\int\nolimits_{0}^{1}}
\left[  \widetilde{g}_{y}\left(  \widetilde{y}_{0}+\lambda\left(
\widetilde{y}_{0}^{\theta}-\widetilde{y}_{0}\right)  \right)  -\widetilde
{g}_{y}\left(  \widetilde{y}_{0}\right)  \right]  \left(  \widetilde{y}%
_{0}^{\theta}-\widetilde{y}_{0}\right)  d\lambda\\
&  +C\theta^{3/2}.
\end{align*}

By applying the Cauchy-Schwartz inequality to the second term in the right
hand side of the above inequality, we get%
\begin{align*}
0  &  \leq\mathbb{E}%
{\displaystyle\int\nolimits_{0}^{T}}
\left[  \widetilde{H}\left(  t,\widetilde{y}_{t},\widetilde{z}_{t}%
,\widetilde{p}_{t},u_{t}\right)  -\widetilde{H}\left(  t,\widetilde{y}%
_{t},\widetilde{z}_{t},\widetilde{p}_{t},u_{t}^{\theta}\right)  \right]  dt\\
&  +\left(
{\displaystyle\int\nolimits_{0}^{1}}
\mathbb{E}\left\vert \widetilde{g}_{y}\left(  \widetilde{y}_{0}+\lambda\left(
\widetilde{y}_{0}^{\theta}-\widetilde{y}_{0}\right)  \right)  -\widetilde
{g}_{y}\left(  \widetilde{y}_{0}\right)  \right\vert ^{2}d\lambda\right)
^{1/2}\left(  \mathbb{E}\left\vert \widetilde{y}_{0}^{\theta}-\widetilde
{y}_{0}\right\vert ^{2}\right)  ^{1/2}\\
&  +C\theta^{3/2}.
\end{align*}

By $\left(  10\right)  $, we deduce%
\begin{align*}
0  &  \leq\mathbb{E}%
{\displaystyle\int\nolimits_{0}^{T}}
\left[  \widetilde{H}\left(  t,\widetilde{y}_{t},\widetilde{z}_{t}%
,\widetilde{p}_{t},u_{t}\right)  -\widetilde{H}\left(  t,\widetilde{y}%
_{t},\widetilde{z}_{t},\widetilde{p}_{t},u_{t}^{\theta}\right)  \right]  dt\\
&  +C\theta\left(
{\displaystyle\int\nolimits_{0}^{1}}
\mathbb{E}\left\vert \widetilde{g}_{y}\left(  \widetilde{y}_{0}+\lambda\left(
\widetilde{y}_{0}^{\theta}-\widetilde{y}_{0}\right)  \right)  -\widetilde
{g}_{y}\left(  \widetilde{y}_{0}\right)  \right\vert ^{2}d\lambda\right)
^{1/2}\\
&  +C\theta^{3/2}.
\end{align*}

From the definition of $u_{t}^{\theta}$, we have%
\begin{align*}
0  &  \leq\mathbb{E}%
{\displaystyle\int\nolimits_{\tau}^{\tau+\theta}}
\left[  \widetilde{H}\left(  t,\widetilde{y}_{t},\widetilde{z}_{t}%
,\widetilde{p}_{t},u_{t}\right)  -\widetilde{H}\left(  t,\widetilde{y}%
_{t},\widetilde{z}_{t},\widetilde{p}_{t},v\right)  \right]  dt\\
&  +C\theta\left(
{\displaystyle\int\nolimits_{0}^{1}}
\mathbb{E}\left\vert \widetilde{g}_{y}\left(  \widetilde{y}_{0}+\lambda\left(
\widetilde{y}_{0}^{\theta}-\widetilde{y}_{0}\right)  \right)  -\widetilde
{g}_{y}\left(  \widetilde{y}_{0}\right)  \right\vert ^{2}d\lambda\right)
^{1/2}\\
&  +C\theta^{3/2}.
\end{align*}

Dividing by $\theta$, we get%
\begin{align}
0  &  \leq\frac{1}{\theta}\mathbb{E}%
{\displaystyle\int\nolimits_{\tau}^{\tau+\theta}}
\left[  \widetilde{H}\left(  t,\widetilde{y}_{t},\widetilde{z}_{t}%
,\widetilde{p}_{t},u_{t}\right)  -\widetilde{H}\left(  t,\widetilde{y}%
_{t},\widetilde{z}_{t},\widetilde{p}_{t},v\right)  \right]  dt\\
&  +C\left(
{\displaystyle\int\nolimits_{0}^{1}}
\mathbb{E}\left\vert \widetilde{g}_{y}\left(  \widetilde{y}_{0}+\lambda\left(
\widetilde{y}_{0}^{\theta}-\widetilde{y}_{0}\right)  \right)  -\widetilde
{g}_{y}\left(  \widetilde{y}_{0}\right)  \right\vert ^{2}d\lambda\right)
^{1/2}\nonumber\\
&  +C\theta^{1/2}.\nonumber
\end{align}

Since $\widetilde{g}_{y}$ is continuous and bounded, then by $\left(
10\right)  $ and the dominated convergence theorem, we have%
\[
\underset{\theta\rightarrow0}{\lim}C%
{\displaystyle\int\nolimits_{0}^{1}}
\left(  \mathbb{E}\left\vert \widetilde{g}_{y}\left(  \widetilde{y}%
_{0}+\lambda\left(  \widetilde{y}_{0}^{\theta}-\widetilde{y}_{0}\right)
\right)  -\widetilde{g}_{y}\left(  \widetilde{y}_{0}\right)  \right\vert
^{2}\right)  ^{1/2}d\lambda=0.
\]

Then, by taking the limit as $\theta\rightarrow0$ in $\left(  20\right)  $, we
obtain%
\[
0\leq\underset{\theta\rightarrow0}{\lim}\frac{1}{\theta}\mathbb{E}%
{\displaystyle\int\nolimits_{\tau}^{\tau+\theta}}
\left[  \widetilde{H}\left(  t,\widetilde{y}_{t},\widetilde{z}_{t}%
,\widetilde{p}_{t},u_{t}\right)  -\widetilde{H}\left(  t,\widetilde{y}%
_{t},\widetilde{z}_{t},\widetilde{p}_{t},v\right)  \right]  dt.
\]

This implies that%
\[
0\leq\mathbb{E}\left[  \widetilde{H}\left(  \tau,\widetilde{y}_{\tau
},\widetilde{z}_{\tau},\widetilde{p}_{\tau},u_{\tau}\right)  -\widetilde
{H}\left(  \tau,\widetilde{y}_{\tau},\widetilde{z}_{\tau},\widetilde{p}_{\tau
},v\right)  \right]  ,\ d\tau-a.e.
\]

Now, let $a\in U$ be a deterministic element and $F$ be an arbitrary element
of the $\sigma$-algebra $\mathcal{F}_{t}$, and set%
\[
w_{t}=a\mathbf{1}_{F}+u_{t}\mathbf{1}_{%
\Omega
-F}.
\]

It is obvious that $w$ is an admissible control.

Since $0\leq\tau\leq T$, then for every bounded $U$-valued, $\mathcal{F}_{t}%
$-measurable random variable $v$ such that $\mathbb{E}|v|^{2}<+\infty$, we get%
\[
0\leq\mathbb{E}\left[  \widetilde{H}\left(  t,\widetilde{y}_{t},\widetilde
{z}_{t},\widetilde{p}_{t},u_{t}\right)  -\widetilde{H}\left(  t,\widetilde
{y}_{t},\widetilde{z}_{t},\widetilde{p}_{t},v\right)  \right]  ,\ dt-a.e,
\]

Applying the above inequality with $w$, we get
\[
0\leq\mathbb{E}[\mathbf{1}_{F}(\widetilde{H}\left(  t,\widetilde{y}%
_{t},\widetilde{z}_{t},\widetilde{p}_{t},u_{t}\right)  -\widetilde{H}\left(
t,\widetilde{y}_{t},\widetilde{z}_{t},\widetilde{p}_{t},a\right)  )],\ \forall
F\in\mathcal{F}_{t},
\]

which implies that%
\[
0\leq\mathbb{E}[\widetilde{H}\left(  t,\widetilde{y}_{t},\widetilde{z}%
_{t},\widetilde{p}_{t},u_{t}\right)  -\widetilde{H}\left(  t,\widetilde{y}%
_{t},\widetilde{z}_{t},\widetilde{p}_{t},a\right)  \ /\ \mathcal{F}_{t}].
\]

The quantity inside the conditional expectation is $\mathcal{F}_{t}%
$-measurable, and thus the result follows immediately. This prove theorem 3.

\section{Necessary and sufficient optimality conditions for strict controls}

Starting from the results of the last section, we can now reformulate the
restricted necessary optimality conditions given by theorem $3$, and state
necessary as well as sufficient optimality conditions for the initial control
problem $\left\{  \left(  1\right)  ,\left(  2\right)  ,\left(  3\right)
\right\}  .$

\subsection{Necessary optimality conditions}

\begin{theorem}
(necessary optimality conditions for strict controls) \textit{Let }$\left(
u,y^{u},z^{u}\right)  $\textit{\ be an optimal solution of the initial control
problem }$\left\{  \left(  1\right)  ,\left(  2\right)  ,\left(  3\right)
\right\}  $\textit{. Then there exists a unique adapted processes}%
\[
p^{u}\in\mathcal{L}^{2}\left(  \left[  0,T\right]  ;\mathbb{R}^{n}\right)  ,
\]
\textit{which are solution of the following forward stochastic differential
equation}%
\begin{equation}
\left\{
\begin{array}
[c]{l}%
-dp_{t}^{u}=H_{y}\left(  t,y_{t}^{u},z_{t}^{u},p_{t}^{u},u_{t}\right)
dt+H_{z}\left(  t,y_{t}^{u},z_{t}^{u},p_{t}^{u},u_{t}\right)  dW_{t},\\
p_{0}^{u}=g_{y}\left(  y_{0}^{u}\right)  ,
\end{array}
\right.
\end{equation}
such that%
\begin{equation}
H\left(  t,y_{t}^{u},z_{t}^{u},p_{t}^{u},u_{t}\right)  =\underset{v\in U}%
{\max}H\left(  t,y_{t}^{u},z_{t}^{u},p_{t}^{u},v\right)  \;;\;a.e\;,\;a.s,
\end{equation}
where the Hamiltonian $H$ is defined from $\left[  0,T\right]  \times
\mathbb{R}^{n}\times\mathcal{M}_{n\times d}\left(  \mathbb{R}\right)
\times\mathbb{R}^{n}\times U$ into $\mathbb{R}$ by%
\[
H\left(  t,y,z,p,v\right)  =pb\left(  t,y,z,v\right)  -h\left(
t,y,z,v\right)  .
\]

\end{theorem}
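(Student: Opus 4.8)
The plan is to deduce Theorem 4 directly from Theorem 3 by decomposing the $(n+1)$-dimensional restricted adjoint process along the block structure of the augmented system. Write $\widetilde{p}_t=(p_t^u,q_t)$, where $p_t^u\in\mathbb{R}^n$ collects the first $n$ components and $q_t\in\mathbb{R}$ is the last one. Since $\widetilde{g}(\widetilde{y})=g(y)-x$, its gradient is $\widetilde{g}_y(\widetilde{y}_0)=(g_y(y_0^u),-1)$, so the initial condition $\widetilde{p}_0=\widetilde{g}_y(\widetilde{y}_0)$ from $(14)$ splits into $p_0^u=g_y(y_0^u)$ and $q_0=-1$. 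The goal is to show that $q_t\equiv-1$, after which the restricted Hamiltonian $\widetilde{H}$ collapses onto $H$ and the restricted adjoint equation $(14)$ collapses onto $(21)$.

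The key step is to exploit that the augmented driver $\widetilde{b}=(b,h)^{\top}$, and hence the restricted Hamiltonian $\widetilde{H}=\widetilde{b}\cdot\widetilde{p}=b\cdot p+h\,q$, does not depend on the auxiliary state component $x$ nor on the auxiliary integrand row $k$: by construction $b$ and $h$ are functions of $(y,z,v)$ only. Consequently the partial derivative of $\widetilde{H}$ in the $x$-direction and in the $k$-direction vanish identically, so the last scalar component of the vector equation $-d\widetilde{p}_t=\widetilde{H}_y\,dt+\widetilde{H}_z\,dW_t$ reads $-dq_t=0$, i.e. $q_t$ has zero drift and zero diffusion. Hence $q_t$ is constant, and with $q_0=-1$ we conclude $q_t=-1$ for every $t$, almost surely. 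This decoupling of the scalar adjoint component is the crux of the reformulation and is where the whole construction of Section 3 pays off.

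With $q_t\equiv-1$ in hand, substitution completes the proof. The restricted Hamiltonian becomes $\widetilde{H}(t,\widetilde{y}_t,\widetilde{z}_t,\widetilde{p}_t,v)=p_t^u\cdot b(t,y_t^u,z_t^u,v)-h(t,y_t^u,z_t^u,v)=H(t,y_t^u,z_t^u,p_t^u,v)$, so the restricted maximum condition $(15)$ is exactly $(22)$. Reading off the first $n$ components of $(14)$, and using that at $q=-1$ one has $\widetilde{H}_y=p_t^u\cdot b_y-h_y=H_y$ and $\widetilde{H}_z=p_t^u\cdot b_z-h_z=H_z$ along $(t,y_t^u,z_t^u,p_t^u,u_t)$, the first block of $(14)$ is precisely $(21)$ with initial datum $p_0^u=g_y(y_0^u)$. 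Uniqueness of $p^u$ follows from the uniqueness of $\widetilde{p}$ asserted in Theorem 3, equivalently from the well-posedness of the linear forward SDE $(21)$ with bounded coefficients. The only point requiring care is the bookkeeping of the matrix and inner-product conventions when identifying $\widetilde{H}_y,\widetilde{H}_z$ with $H_y,H_z$, and checking that the $k$-derivative is genuinely the full last row so that the scalar equation for $q_t$ decouples cleanly; once the independence of $b$ and $h$ from $(x,k)$ is invoked, this is routine.
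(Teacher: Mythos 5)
Your proposal is correct and follows essentially the same route as the paper: the paper's proof sets $\widetilde{p}_t=(p_t^u,-1)$, notes $\widetilde{H}(t,\widetilde{y}_t,\widetilde{z}_t,\widetilde{p}_t,u_t)=H(t,y_t^u,z_t^u,p_t^u,u_t)$, and reads $(21)$ and $(22)$ off from $(14)$ and $(15)$. Your only addition is to make explicit the step the paper leaves implicit, namely that the last component of $\widetilde{p}$ has zero drift and zero diffusion (because $b$ and $h$ do not depend on $x$ or $k$) and therefore stays equal to its initial value $-1$; this is a worthwhile clarification but not a different argument.
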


\begin{proof}
We put%
\[
\widetilde{p}_{t}=\left(
\begin{array}
[c]{c}%
p_{t}^{u}\\
-1
\end{array}
\right)  .
\]

From the definition of $\widetilde{H},\ \widetilde{p},\ \widetilde{b}$ and
$\widetilde{z}$, we have%
\begin{equation}
\widetilde{H}\left(  t,\widetilde{y}_{t},\widetilde{z}_{t},\widetilde{p}%
_{t},u_{t}\right)  =H\left(  t,y_{t}^{u},z_{t}^{u},p_{t}^{u},u_{t}\right)  ,
\end{equation}
and from the adjoint equation $\left(  14\right)  $, we can easily deduce
$\left(  21\right)  $. Finally $\left(  22\right)  $ is derived immediately
from $\left(  23\right)  $ and $\left(  15\right)  $.
\end{proof}

\subsection{Sufficient optimality conditions}

\begin{theorem}
(Sufficient optimality conditions for strict controls). If we assume that, $U$
is convex and for every $v\in\mathcal{U}$ and for all $t\in\left[  0,T\right]
$, the function $g$ is convex and $\left(  y_{t},z_{t},v_{t}\right)
\longrightarrow H\left(  t,y_{t},z_{t},p_{t},v_{t}\right)  $ is concave. Then
$u$ is an optimal control of the problem $\left\{  \left(  1\right)  ,\left(
2\right)  ,\left(  3\right)  \right\}  $ if it satisfies $\left(  22\right)
.$
\end{theorem}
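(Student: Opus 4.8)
The plan is to prove sufficiency by estimating the cost difference $J(v) - J(u)$ for an arbitrary admissible control $v$ and showing it is nonnegative. The natural tool is It\^{o}'s formula applied to the product $p_t^u(y_t^v - y_t^u)$, combined with the convexity of $g$ and the concavity of the Hamiltonian $H$. The key structural fact I would exploit is that the maximum condition $(22)$, together with concavity of $H$ in $(y,z,v)$, yields a pointwise inequality relating $H$ at $(y^v,z^v,v)$ to $H$ at the optimal point through the gradient of $H$ evaluated along the optimal trajectory.

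**First I would** write down the cost difference using the definition of $J$:
\begin{align*}
J(v) - J(u) = \mathbb{E}\left[g(y_0^v) - g(y_0^u)\right] + \mathbb{E}\int_0^T \left[h(t,y_t^v,z_t^v,v_t) - h(t,y_t^u,z_t^u,u_t)\right]dt.
\end{align*}
Since $g$ is convex, I bound the first term below by $\mathbb{E}\left[g_y(y_0^u)(y_0^v - y_0^u)\right] = \mathbb{E}\left[p_0^u(y_0^v - y_0^u)\right]$, using the initial condition of the adjoint equation $(21)$. Next I apply It\^{o}'s formula to $p_t^u(y_t^v - y_t^u)$ over $[0,T]$. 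Because both $y^v$ and $y^u$ satisfy the BSDE $(1)$ with the common terminal value $\xi$, the terminal contribution at $T$ vanishes, and the drift of $p^u$ is $-H_y$ while the martingale part is governed by $H_z$. After taking expectations the stochastic integrals disappear, and the boundary term at $0$ can be expressed through an integral over $[0,T]$ involving $H_y$, $H_z$, and the differences $b(t,y^v,z^v,v)-b(t,y^u,z^u,u)$.

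**Then I would** assemble these pieces so that $J(v)-J(u)$ is bounded below by an expectation of a time integral whose integrand, using $H(t,y,z,p,v)=p\,b(t,y,z,v)-h(t,y,z,v)$, takes the form
\begin{align*}
H(t,y_t^u,z_t^u,p_t^u,u_t) - H(t,y_t^v,z_t^v,p_t^u,v_t) + H_y\cdot(y_t^v-y_t^u) + H_z\cdot(z_t^v-z_t^u),
\end{align*}
all evaluated along the optimal path for the derivatives. Concavity of $(y,z,v)\mapsto H(t,y,z,p_t^u,v)$ gives that the difference $H(t,y_t^v,z_t^v,p_t^u,v_t) - H(t,y_t^u,z_t^u,p_t^u,u_t)$ is at most the linear gradient term $H_y\cdot(y_t^v-y_t^u)+H_z\cdot(z_t^v-z_t^u)$ plus the contribution of varying $v$; and the maximum principle $(22)$ ensures that moving the control from $u_t$ to $v_t$ cannot increase $H$. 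Combining concavity with the optimality of $u_t$ in the Hamiltonian forces the integrand to be nonnegative pointwise, hence $J(v)-J(u)\geq 0$.

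**The hard part will be** correctly managing the contribution of the $z$-variable when applying It\^{o}'s formula, since for a backward system the martingale integrand $z_t^v - z_t^u$ enters both through the $H_z\,dW_t$ term of the adjoint dynamics and through the cross-variation, and one must verify that these combine to exactly the gradient term needed for the concavity inequality. I would also need to ensure the concavity argument handles the joint dependence on $(y,z,v)$ simultaneously rather than separately, so that the single concavity hypothesis together with $(22)$ closes the estimate; this is where the interplay between the variational (concavity) inequality and the maximum condition is most delicate.
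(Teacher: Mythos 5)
Your proposal follows essentially the same route as the paper's own proof: the same cost-difference decomposition, the same use of convexity of $g$ together with $p_{0}^{u}=g_{y}\left(y_{0}^{u}\right)$, the same application of It\^{o}'s formula to $p_{t}^{u}\left(y_{t}^{v}-y_{t}^{u}\right)$, and the same combination of joint concavity of $H$ with the maximum condition $\left(22\right)$. The one step you leave implicit --- passing from $\left(22\right)$ to the pointwise inequality $H_{v}\left(t,y_{t}^{u},z_{t}^{u},p_{t}^{u},u_{t}\right)\left(u_{t}-v_{t}\right)\geq0$, which requires the first-order condition for maximizing a concave differentiable function over the convex set $U$ (the paper cites Ekeland--Temam for this equivalence) --- is precisely what your phrase ``combining concavity with the optimality of $u_{t}$'' amounts to, so the argument closes as in the paper.
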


\begin{proof}
Let $u$ be an arbitrary admissible control (candidate to be optimal) and
$\left(  y_{t}^{u},z_{t}^{u}\right)  $ the solution of $\left(  1\right)  $
associated with $u$. For any admissible control $v$, with associated
trajectory $\left(  y_{t}^{v},z_{t}^{v}\right)  $, we have%
\begin{align*}
J\left(  v\right)  -J\left(  u\right)   &  =\mathbb{E}\left[  g\left(
y_{0}^{v}\right)  -g\left(  y_{0}^{u}\right)  \right] \\
&  +\mathbb{E}%
{\displaystyle\int\nolimits_{0}^{T}}
\left[  h\left(  t,y_{t}^{v},z_{t}^{v},v_{t}\right)  -h\left(  t,y_{t}%
^{u},z_{t}^{u},u_{t}\right)  \right]  dt.
\end{align*}

Since $g$ is convex, then%
\[
g\left(  y_{0}^{v}\right)  -g\left(  y_{0}^{u}\right)  \geq g_{y}\left(
y_{0}^{u}\right)  \left(  y_{0}^{v}-y_{0}^{u}\right)  .
\]

Then%
\begin{align*}
J\left(  v\right)  -J\left(  u\right)   &  \geq\mathbb{E}\left[  g_{y}\left(
y_{0}^{u}\right)  \left(  y_{0}^{v}-y_{0}^{u}\right)  \right] \\
&  +\mathbb{E}%
{\displaystyle\int\nolimits_{0}^{T}}
\left[  h\left(  t,y_{t}^{v},z_{t}^{v},v_{t}\right)  -h\left(  t,y_{t}%
^{u},z_{t}^{u},u_{t}\right)  \right]  dt.
\end{align*}

We remark from $\left(  21\right)  $ that
\[
p_{0}^{u}=g_{y}\left(  y_{0}^{u}\right)  .
\]

Then, we have%
\begin{align*}
J\left(  v\right)  -J\left(  u\right)   &  \geq\mathbb{E}\left[  p_{0}%
^{u}\left(  y_{0}^{v}-y_{0}^{u}\right)  \right] \\
&  +\mathbb{E}%
{\displaystyle\int\nolimits_{0}^{T}}
\left[  h\left(  t,y_{t}^{v},z_{t}^{v},v_{t}\right)  -h\left(  t,y_{t}%
^{u},z_{t}^{u},u_{t}\right)  \right]  dt.
\end{align*}

By applying It\^{o}'s formula to $p_{t}^{u}\left(  y_{t}^{v}-y_{t}^{u}\right)
$, we obtain%
\begin{align*}
&  J\left(  v\right)  -J\left(  u\right) \\
&  \geq\mathbb{E}%
{\displaystyle\int\nolimits_{0}^{T}}
\left[  H_{y}\left(  t,y_{t}^{u},z_{t}^{u},p_{t}^{u},u_{t}\right)  \left(
y_{t}^{v}-y_{t}^{u}\right)  +H_{z}\left(  t,y_{t}^{u},z_{t}^{u},p_{t}%
^{u},u_{t}\right)  \left(  z_{t}^{v}-z_{t}^{u}\right)  \right]  dt\\
&  +\mathbb{E}%
{\displaystyle\int\nolimits_{0}^{T}}
\left[  H\left(  t,y_{t}^{u},z_{t}^{u},p_{t}^{u},u_{t}\right)  -H\left(
t,y_{t}^{v},z_{t}^{v},p_{t}^{u},v_{t}\right)  \right]  dt.
\end{align*}

Since $H$ is concave in $\left(  y,z,u\right)  $, then%
\begin{align*}
&  H\left(  t,y_{t}^{v},z_{t}^{v},p_{t}^{u},v_{t}\right)  -H\left(
t,y_{t}^{u},z_{t}^{u},p_{t}^{u},u_{t}\right) \\
&  \leq H_{y}\left(  t,y_{t}^{u},z_{t}^{u},p_{t}^{u},u_{t}\right)  \left(
y_{t}^{v}-y_{t}^{u}\right) \\
&  +H_{z}\left(  t,y_{t}^{u},z_{t}^{u},p_{t}^{u},u_{t}\right)  \left(
z_{t}^{v}-z_{t}^{u}\right)  +H_{v}\left(  t,y_{t}^{u},z_{t}^{u},p_{t}%
^{u},u_{t}\right)  \left(  v_{t}-u_{t}\right)  .
\end{align*}

Or equivalently%
\begin{align*}
&  H_{v}\left(  t,y_{t}^{u},z_{t}^{u},p_{t}^{u},u_{t}\right)  \left(
u_{t}-v_{t}\right) \\
&  \leq H\left(  t,y_{t}^{u},z_{t}^{u},p_{t}^{u},u_{t}\right)  -H\left(
t,y_{t}^{v},z_{t}^{v},p_{t}^{u},v_{t}\right) \\
&  +H_{y}\left(  t,y_{t}^{u},z_{t}^{u},p_{t}^{u},u_{t}\right)  \left(
y_{t}^{v}-y_{t}^{u}\right)  +H_{z}\left(  t,y_{t}^{u},z_{t}^{u},p_{t}%
^{u},u_{t}\right)  \left(  z_{t}^{v}-z_{t}^{u}\right)  .
\end{align*}

Then, we get%
\begin{equation}
J\left(  v\right)  -J\left(  u\right)  \geq\mathbb{E}%
{\displaystyle\int\nolimits_{0}^{T}}
H_{v}\left(  t,y_{t}^{u},z_{t}^{u},p_{t}^{u},u_{t}\right)  \left(  u_{t}%
-v_{t}\right)  dt.
\end{equation}

We know that $H\left(  t,y_{t}^{u},z_{t}^{u},p_{t}^{u},.\right)  $ is concave,
then $-H\left(  t,y_{t}^{u},z_{t}^{u},p_{t}^{u},.\right)  $ is convex from $U$
into $\mathbb{R}$. Furthermore $U$ is convex and $-H\left(  t,y_{t}^{u}%
,z_{t}^{u},p_{t}^{u},.\right)  $ is continuous, G\^{a}teaux-differentiable,
with differential continuous, then from the convex optimization principle (see
Ekeland-Temam $\left[  11,\text{\ prop 2.1, page 35}\right]  $), we have%
\[
-H\left(  t,y_{t}^{u},z_{t}^{u},p_{t}^{u},u_{t}\right)  =\underset{v_{t}\in
U}{\inf}-H\left(  t,y_{t}^{u},z_{t}^{u},p_{t}^{u},v_{t}\right)
\Longleftrightarrow-H_{v}\left(  t,y_{t}^{u},z_{t}^{u},p_{t}^{u},u_{t}\right)
\left(  v_{t}-u_{t}\right)  \geq0.
\]

Or equivalently%
\[
H\left(  t,y_{t}^{u},z_{t}^{u},p_{t}^{u},u_{t}\right)  =\underset{v_{t}\in
U}{\max}H\left(  t,y_{t}^{u},z_{t}^{u},p_{t}^{u},v_{t}\right)
\Longleftrightarrow H_{v}\left(  t,y_{t}^{u},z_{t}^{u},p_{t}^{u},u_{t}\right)
\left(  u_{t}-v_{t}\right)  \geq0.
\]

Then from the necessary condition of optimality $\left(  22\right)  $, we
deduce that
\[
H_{v}\left(  t,y_{t}^{u},z_{t}^{u},p_{t}^{u},u_{t}\right)  \left(  u_{t}%
-v_{t}\right)  \geq0.
\]

And from $\left(  24\right)  $, we have
\[
J\left(  v\right)  -J\left(  u\right)  \geq0.
\]

The theorem is proved.
\end{proof}

\section{The relaxed model}

In this section, we generalize the results of the above section to a relaxed
control problem. The idea for relaxed the strict control problem defined above
is to embed the set $U$ of strict controls into a wider class which gives a
more suitable topological structure. In the relaxed model, the $U$-valued
process $v$ is replaced by a $\mathbb{P}\left(  U\right)  $-valued process
$q$, where $\mathbb{P}\left(  U\right)  $ denotes the space of probability
measure on $U$ equipped with the topology of stable convergence.

Let $V$ the set of positive random measures on $\left[  0,T\right]  \times U$
whose projection on $\left[  0,T\right]  $ coincide with the Lebesgue measure
$dt$. Equipped with the topology of stable convergence of measures, $V$ is a
compact metrizable space. The stable convergence is required for bounded
measurable functions $f\left(  t,a\right)  $ such that for each fixed
$t\in\left[  0,T\right]  $, $h\left(  t,.\right)  $ is continuous. The space
$V$ is equipped with its Borel $\sigma$-field, which is the smallest $\sigma
$-field such that the mapping $q\longmapsto%
{\displaystyle\int}
f\left(  s,a\right)  q\left(  ds,da\right)  $ are measurable for any bounded
measurable function $f$, continuous with respect to $a$ (Instead of functions
bounded and continuous with respect to the pair $\left(  t,a\right)  $ for the
weak topology)$.$

For more details, see Jacod-Memin $\left[  18,\ \text{page\ 629-630}\right]  $
and El Karoui et al $\left[  9,\ \text{Page 4-5}\right]  .$

\begin{definition}
A relaxed control $\left(  q_{t}\right)  _{t}$ is a $\mathbb{P}\left(
U\right)  $-valued process, progressively measurable with respect to $\left(
\mathcal{F}_{t}\right)  _{t}$\ and such that for each $t$, $1_{]0,t]}.q$\ is
$\mathcal{F}_{t}$-measurable.

\textit{We denote by }$\mathcal{R}$\textit{\ the set of all relaxed controls.}
\end{definition}

\ 

Every relaxed control $q$ may be desintegrated as $q\left(  dt,da\right)
=q\left(  t,da\right)  dt=q_{t}\left(  da\right)  dt$, where $q_{t}\left(
da\right)  $ is a progressively measurable process with value in the set of
probability measures $\mathbb{P}(U).$

The set $U$ is embedded into the set $\mathcal{R}$\ of relaxed process by the
mapping
\[
f:v\in U\mathbb{\longmapsto}f_{v}\left(  dt,da\right)  =\delta_{v_{t}%
}(da)dt\in\mathcal{R}%
\]
where $\delta_{v}$ is the atomic measure concentrated at a single point $v$.

\ 

For more details on relaxed controls, see $\left[  2\right]  ,\left[
4\right]  ,\left[  5\right]  ,\left[  12\right]  ,\left[  16\right]  ,\left[
26\right]  ,\left[  27\right]  .$

\ 

For any $q\in\mathcal{R}$, we consider the following relaxed BSDE
\begin{equation}
\left\{
\begin{array}
[c]{l}%
dy_{t}^{q}=\int_{U}b\left(  t,y_{t}^{q},z_{t}^{q},a\right)  q_{t}\left(
da\right)  dt+z_{t}^{q}dW_{t},\\
y_{T}^{q}=\xi.
\end{array}
\right.
\end{equation}

The expected cost associated to a relaxed control $q$ is defined as follows%
\begin{equation}
\mathcal{J}\left(  q\right)  =\mathbb{E}\left[  g\left(  y_{0}^{q}\right)
+\int_{0}^{T}\int_{U}h\left(  t,y_{t}^{q},z_{t}^{q},a\right)  q_{t}\left(
da\right)  dt\right]  .
\end{equation}

Our objective is to minimize the functional $\mathcal{J}$ over $\mathcal{R}$.
If $\mu\in\mathcal{R}$ is an optimal relaxed control, that is
\begin{equation}
\mathcal{J}\left(  \mu\right)  =\inf\limits_{q\in\mathcal{R}}\mathcal{J}%
\left(  q\right)  .
\end{equation}

\ 

Throughout this section we suppose moreover that
\begin{align}
&  U\text{ is compact,}\nonumber\\
&  b\text{ and }h\text{ are bounded,}\\
&  b_{y},h_{y},b_{z}\text{ and }h_{z}\text{ are Lipschitz continuous in
}z.\nonumber
\end{align}

\begin{remark}
If we put
\begin{align*}
\overline{b}\left(  t,y_{t}^{q},z_{t}^{q},q_{t}\right)   &  =\int_{U}b\left(
t,y_{t}^{q},z_{t}^{q},a\right)  q_{t}\left(  da\right)  ,\\
\overline{h}\left(  t,y_{t}^{q},z_{t}^{q},q_{t}\right)   &  =\int_{U}h\left(
t,y_{t}^{q},z_{t}^{q},a\right)  q_{t}\left(  da\right)  ,
\end{align*}
then equation $\left(  25\right)  $ becomes
\[
\left\{
\begin{array}
[c]{l}%
dy_{t}^{q}=\overline{b}\left(  t,y_{t}^{q},z_{t}^{q},q_{t}\right)
dt+z_{t}^{q}dW_{t},\\
y^{q}\left(  T\right)  =\xi,
\end{array}
\right.
\]
with a functional cost given by
\[
\mathcal{J}\left(  q\right)  =\mathbb{E}\left[  g\left(  y_{0}^{q}\right)
+\int_{0}^{T}\overline{h}\left(  t,y_{t}^{q},z_{t}^{q},q_{t}\right)
dt\right]  .
\]

Hence by introducing relaxed controls, we have replaced $U$ by a larger space
$\mathbb{P}\left(  U\right)  $. We have gained the advantage that
$\mathbb{P}\left(  U\right)  $ is both compact and convex, the new drift and
the integral coefficient of $\mathcal{J}$ are linear in $q.$

On the other hand, the coefficients $\overline{b}\ $(defined above) check the
same assumptions as $b$. Then, under assumptions $\left(  4\right)  $,
$\overline{b}\ $is uniformly Lipschitz and with linear growth. Then, by
classical results on BSDEs (The Pardoux-Peng theorem, see : Pardoux-Peng
$\left[  28\right]  $), for every $q\in\mathcal{R}$, equation $\left(
25\right)  $ has a unique solution.

Moreover, It is easy to see that $\overline{h}$ checks the same assumptions as
$h$. Then, the functional cost $\mathcal{J}$ is well defined from
$\mathcal{R}$ into $\mathbb{R}$.
\end{remark}

\begin{remark}
If $q_{t}=\delta_{v_{t}}$ is an atomic measure concentrated at a single point
$v_{t}$, then for each $t\in\left[  0,T\right]  $ we have
\begin{align*}
\int_{U}b\left(  t,y_{t}^{q},z_{t}^{q},a\right)  q_{t}\left(  da\right)   &
=\int_{U}b\left(  t,y_{t}^{q},z_{t}^{q},a\right)  \delta_{v_{t}}\left(
da\right)  =b\left(  t,y_{t}^{q},z_{t}^{q},v_{t}\right)  ,\\
\int_{U}h\left(  t,y_{t}^{q},z_{t}^{q},a\right)  q_{t}\left(  da\right)   &
=\int_{U}h\left(  t,y_{t}^{q},z_{t}^{q},a\right)  \delta_{v_{t}}\left(
da\right)  =h\left(  t,y_{t}^{q},z_{t}^{q},v_{t}\right)  .
\end{align*}

In this case $\left(  y^{q},z^{q}\right)  =\left(  y^{v},z^{v}\right)  $,
$J\left(  v\right)  =\mathcal{J}\left(  q\right)  $ and we get an ordinary
admissible control problem. So the problem of strict controls defined in the
section 2 is a particular case of the problem of relaxed one.
\end{remark}

\subsection{Approximation of trajectories}

The next lemma, known as the Chattering Lemma, tells us that any relaxed
control is a stable limit of a sequence of strict controls. This lemma was
first proved for deterministic measures and then extended to random measures
in $\left[  12\right]  $ and $\left[  16\right]  $.

\begin{lemma}
(Chattering\thinspace\thinspace Lemma).\thinspace\textit{Let }$q_{t}%
$\textit{\ be a predictable process with values in the space of probability
measures on }$U$\textit{. Then there exists a sequence of predictable
processes }$\left(  u^{n}\right)  _{n}$\textit{\ with values in }%
$U$\textit{\ such that }%
\begin{equation}
dtq_{t}^{n}\left(  da\right)  =dt\delta_{u_{t}^{n}}\left(  da\right)
\underset{n\longrightarrow\infty}{\longrightarrow}dtq_{t}\left(  da\right)
\text{ stably},\text{\textit{\ \ }}\mathcal{P}-a.s.
\end{equation}

\end{lemma}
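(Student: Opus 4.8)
The plan is to prove the result in two stages: first for deterministic measure-valued maps on the time interval, then lift to the random (predictable) setting by a measurable-selection argument. For the deterministic case, I would begin by recalling that the space $\mathbb{P}(U)$ of probability measures on the compact set $U$, endowed with the topology of weak convergence, is itself compact and metrizable. Choose a countable dense family $\left(\phi_k\right)_{k\geq 1}$ of continuous functions on $U$ that separates points of $\mathbb{P}(U)$, so that stable convergence of $dt\,q^n_t(da)$ toward $dt\,q_t(da)$ is equivalent to the convergence
\[
\int_0^T\!\!\int_U \phi_k(a)\,q^n_t(da)\,\psi(t)\,dt \;\longrightarrow\; \int_0^T\!\!\int_U \phi_k(a)\,q_t(da)\,\psi(t)\,dt
\]
for every $k$ and every $\psi\in L^1([0,T])$. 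The core construction is a piecewise approximation: I would partition $[0,T]$ into $n$ subintervals of length $T/n$, and on each subinterval replace the measure-valued map by a strict control obtained by sampling from the barycenter-type behavior of $q_t$ on that piece. Concretely, one shows that a single Dirac mass, oscillating rapidly enough within each small time cell, can reproduce in the limit the averaging effect of integrating against $q_t(da)$; this is the classical ``chattering'' phenomenon, where fast switching of a bang-bang type control mimics a genuine mixture.

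Next I would verify the stable convergence $\left(29\right)$ for this constructed sequence. The key estimate is that over each cell of length $T/n$, the time-averaged action of $\delta_{u^n_t}$ against each test function $\phi_k$ approximates the time-averaged action of $q_t$; summing over cells and letting $n\to\infty$ gives the required convergence against the countable separating family, hence stable convergence in the topology of $V$. Since $\mathbb{P}(U)$ is compact and the family $\left(\phi_k\right)_k$ is countable, a diagonal argument controls all test functions simultaneously, and metrizability of $V$ ensures that stable convergence of the joint measures $dt\,\delta_{u^n_t}(da)$ follows from convergence against this family.

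To pass to the random case, I would invoke a measurable selection theorem to guarantee that the construction above can be carried out $\omega$ by $\omega$ in a jointly measurable (indeed predictable) way. Since $q_t$ is predictable with values in the compact metrizable space $\mathbb{P}(U)$, the approximating strict controls $u^n_t$ can be chosen predictable by selecting, on each time cell, a measurable minimizer of the discrepancy between the cell-averaged Dirac action and the cell-averaged relaxed action; the Kuratowski–Ryll-Nardzewski selection theorem applies because the relevant set-valued map has closed values in the compact set $U$. The main obstacle is precisely this measurability: ensuring that the fast-oscillation construction, which is transparent pointwise in $\omega$, yields $\mathcal{F}_t$-predictable processes $\left(u^n\right)_n$ rather than merely jointly measurable ones, and that the stable convergence $\left(29\right)$ holds $\mathcal{P}$-almost surely rather than only in some averaged sense. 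Once predictability and the almost-sure stable convergence are secured, the statement follows; I would remark that the full details are classical and defer to the references $\left[12\right]$ and $\left[16\right]$ cited in the paper, where the extension from deterministic to random measures is carried out.
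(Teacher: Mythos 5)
The paper does not actually prove this lemma: its entire ``proof'' is the citation \emph{See El Karoui et al $\left[12\right]$}, so your sketch necessarily goes beyond what the paper itself does, and the right comparison is with the classical argument in that reference. Your outline is essentially that argument: (i) in the deterministic case, use compactness and metrizability of $\mathbb{P}(U)$ (the paper does assume $U$ compact in $\left(28\right)$, so this is available), reduce stable convergence to convergence against a countable family of product test functions $\phi_k(a)\psi(t)$, and construct the strict controls by fast switching on a fine time grid; (ii) lift the construction to random measures while preserving predictability. Two refinements are worth noting, though neither is a gap. First, your ``barycenter-type sampling'' step should be made concrete: the standard chattering construction fixes a finite partition $A_1,\dots,A_m$ of $U$ with representative points $a_j\in A_j$, and on each time cell allocates to the constant value $a_j$ a sub-interval whose length is proportional to the time-averaged weight that $q$ assigns to $A_j$ on that cell; the fast alternation among the $a_j$ then reproduces the mixture in the limit, and the error against each $\phi_k$ is controlled by the oscillation of $\phi_k$ on the partition plus the cell size. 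Second, your appeal to the Kuratowski--Ryll-Nardzewski selection theorem is heavier machinery than is needed for the random case: since the weights $t\mapsto q_t(A_j)$ are themselves predictable real-valued processes, the control produced by the allocation above is built directly and measurably from them, hence is automatically predictable, and the stable convergence is obtained pathwise (for each fixed $\omega$), which is exactly the $\mathcal{P}$-a.s.\ statement $\left(29\right)$. With those two points tightened, your plan is a faithful reconstruction of the proof the paper delegates to $\left[12\right]$ and $\left[16\right]$.
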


\begin{proof}
See El Karoui et al $\left[  12\right]  .$
\end{proof}

\begin{lemma}
Let $q$ be a relaxed control and $\left(  u^{n}\right)  _{n}$ be a sequence of
strict controls such that $\left(  29\right)  $ holds. Then for any bounded
function $f:\left[  0,T\right]  \times U\rightarrow\mathbb{R}$, measurable in
$t$ and continuous in $a$, we have%
\begin{equation}%
{\displaystyle\int\nolimits_{U}}
f\left(  t,a\right)  \delta_{u_{t}^{n}}\left(  da\right)  \underset
{n\longrightarrow\infty}{\longrightarrow}%
{\displaystyle\int\nolimits_{U}}
f\left(  t,a\right)  q_{t}\left(  da\right)  .
\end{equation}

\end{lemma}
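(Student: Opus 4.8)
The plan is to read (30) as the weak convergence of the time functions $t\mapsto\int_U f(t,a)\delta_{u_t^n}(da)=f(t,u_t^n)$ to $t\mapsto\int_U f(t,a)q_t(da)$, and to deduce it directly from the definition of stable convergence supplied by the chattering lemma, Lemma 9. By the very definition of the topology on $V$ recalled at the start of Section 5, the stable convergence (29) means that for every bounded integrand $\phi:[0,T]\times U\to\mathbb{R}$ that is measurable in $t$ and continuous in $a$,
\[
\int_0^T\int_U \phi(t,a)\,\delta_{u_t^n}(da)\,dt \xrightarrow{n\to\infty} \int_0^T\int_U \phi(t,a)\,q_t(da)\,dt,\qquad \mathcal{P}\text{-a.s.}
\]
Since $f$ is assumed bounded, measurable in $t$ and continuous in $a$, it is exactly such an admissible integrand, and the same is true of the product $g(t)f(t,a)$ for any bounded measurable time weight $g$.

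First I would apply the displayed convergence to the integrand $\phi(t,a)=g(t)f(t,a)$, which yields
\[
\int_0^T g(t)\,f(t,u_t^n)\,dt \xrightarrow{n\to\infty}\int_0^T g(t)\left(\int_U f(t,a)\,q_t(da)\right)dt
\]
for every bounded measurable $g$, that is, the weak convergence asserted in (30). To reach an arbitrary $g\in L^1([0,T])$, which is the form actually needed to pass to the limit inside the state equation (25) and the cost (26), I would truncate $g$ and invoke the uniform bound $\|f\|_\infty$ together with dominated convergence, a routine density argument that introduces no new difficulty.

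The one point that must be handled carefully is the mode of convergence. Because a chattering sequence $u_t^n$ genuinely oscillates, $f(t,u_t^n)$ does not converge for fixed (nor for a.e.) $t$, so (30) cannot hold as a pointwise limit and must be understood in the weak sense above, that is, as weak convergence of the measures $f(t,u_t^n)\,dt$ on $[0,T]$. Once this interpretation is granted there is essentially no obstacle: the statement is merely the specialization of stable convergence to the single Carath\'eodory integrand $f$, requiring no estimates beyond the elementary density step, which is itself controlled by the boundedness of $f$.
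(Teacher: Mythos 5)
Your argument is sound, and it is in fact a correction of the paper's proof rather than a re-derivation of it. Both arguments hinge on the same mechanism, namely testing the stable convergence $(29)$ against time-modulated versions of $f$: you take $\phi(t,a)=g(t)f(t,a)$ with $g$ bounded and measurable, while the paper takes $1_{[0,t]}(s)f(s,a)$, passes from intervals $[0,t]$ to arbitrary Borel sets $B\subset[0,T]$ by a monotone-class argument (legitimate, thanks to the uniform bound on $f$), and then concludes that the integrands themselves converge $dt$-almost everywhere. That final inference is exactly the step your last paragraph objects to, and it is indeed invalid: convergence of the integrals over every Borel set is only weak convergence of the (uniformly bounded) densities, and a chattering sequence is precisely a Rademacher-type oscillation for which almost-everywhere convergence fails. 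Concretely, if $q_{t}\equiv\frac{1}{2}\left(\delta_{a_{1}}+\delta_{a_{2}}\right)$ and $f$ equals $1$ at $a_{1}$ and $0$ at $a_{2}$, then $\int_{U}f(t,a)\,\delta_{u_{t}^{n}}(da)\in\{0,1\}$ for every $t$ and $n$, while $\int_{U}f(t,a)\,q_{t}(da)=\frac{1}{2}$, so $(30)$ read pointwise is false for every approximating sequence. Your weak reading is therefore the strongest statement that can be true, and your proof of it, including the routine truncation step extending the test functions $g$ to $L^{1}$, is correct.

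One point you should flag, however: your claim that the weak form is "the form actually needed" downstream is not accurate for this paper. In the proof of Lemma 11, the term in display $(36)$ is disposed of by "$(30)$ and the dominated convergence theorem", which requires the pointwise $dt$-a.e. version of $(30)$; with only weak convergence, the quantity $\mathbb{E}\int_{0}^{T}\left\vert\int_{U}b\,\delta_{u_{t}^{n}}(da)-\int_{U}b\,q_{t}(da)\right\vert^{2}dt$ need not vanish at all --- in the two-point example above it is a positive constant independent of $n$. So your reinterpretation saves Lemma 10, but it simultaneously exposes that the paper's subsequent stability argument for the trajectories cannot be run as written and would require a genuinely different proof.
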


\begin{proof}
By the Chattering lemma and the definition of the stable convergence (see
Jacod-Memin $\left[  21,\ \text{definition 1.1, page 529}\right]  $, we have%
\[%
{\displaystyle\int\nolimits_{0}^{T}}
{\displaystyle\int\nolimits_{U}}
f\left(  t,a\right)  \delta_{u_{t}^{n}}\left(  da\right)  dt\underset
{n\longrightarrow\infty}{\longrightarrow}%
{\displaystyle\int\nolimits_{0}^{T}}
{\displaystyle\int\nolimits_{U}}
f\left(  t,a\right)  q_{t}\left(  da\right)  dt.
\]

Put%
\[
g\left(  s,a\right)  =1_{\left[  0,t\right]  }\left(  s\right)  f\left(
s,a\right)  .
\]

It's clear that%
\[%
{\displaystyle\int\nolimits_{0}^{T}}
{\displaystyle\int\nolimits_{U}}
g\left(  s,a\right)  \delta_{u_{s}^{n}}\left(  da\right)  ds\underset
{n\longrightarrow\infty}{\longrightarrow}%
{\displaystyle\int\nolimits_{0}^{T}}
{\displaystyle\int\nolimits_{U}}
g\left(  s,a\right)  q_{s}\left(  da\right)  ds.
\]

Then%
\[%
{\displaystyle\int\nolimits_{0}^{t}}
{\displaystyle\int\nolimits_{U}}
f\left(  s,a\right)  \delta_{u_{s}^{n}}\left(  da\right)  ds\underset
{n\longrightarrow\infty}{\longrightarrow}%
{\displaystyle\int\nolimits_{0}^{t}}
{\displaystyle\int\nolimits_{U}}
f\left(  s,a\right)  q_{s}\left(  da\right)  ds.
\]

The set $\left\{  \left(  s,t\right)  \ ;\ 0\leq s\leq t\leq T\right\}  $
generate $\mathcal{B}_{\left[  0,T\right]  }$. Then $\forall B\in
\mathcal{B}_{\left[  0,T\right]  }$, we have%
\[%
{\displaystyle\int\nolimits_{B}}
{\displaystyle\int\nolimits_{U}}
f\left(  s,a\right)  \delta_{u_{s}^{n}}\left(  da\right)  ds\underset
{n\longrightarrow\infty}{\longrightarrow}%
{\displaystyle\int\nolimits_{B}}
{\displaystyle\int\nolimits_{U}}
f\left(  s,a\right)  q_{s}\left(  da\right)  ds.
\]

This implies that%
\[%
{\displaystyle\int\nolimits_{U}}
f\left(  s,a\right)  \delta_{u_{s}^{n}}\left(  da\right)  \underset
{n\longrightarrow\infty}{\longrightarrow}%
{\displaystyle\int\nolimits_{U}}
f\left(  s,a\right)  q_{s}\left(  da\right)  \ ,\ \ dt-a.e.
\]

The lemma is proved.
\end{proof}

\ 

The next lemma gives the stability of the controlled stochastic differential
equation with respect to the control variable.

\begin{lemma}
Let $q_{t}\in\mathcal{R}$ be a relaxed control and $\left(  y^{q}%
,z^{q}\right)  $\textit{\ the corresponding trajectory. Then there exists a
sequence }$\left(  u^{n}\right)  _{n}\subset\mathcal{U}$ such that
\begin{equation}
\underset{n\rightarrow\infty}{\lim}\mathbb{E}\left[  \underset{t\in\left[
0,T\right]  }{\sup}\left\vert y_{t}^{n}-y_{t}^{q}\right\vert ^{2}\right]  =0,
\end{equation}%
\begin{equation}
\underset{n\rightarrow\infty}{\lim}\mathbb{E}\int_{0}^{T}\left\vert z_{t}%
^{n}-z_{t}^{q}\right\vert ^{2}dt=0,
\end{equation}%
\begin{equation}
\underset{n\rightarrow\infty}{\lim}J\left(  u^{n}\right)  =\mathcal{J}\left(
q\right)  ,
\end{equation}
where $\left(  y^{n},z^{n}\right)  $ denotes the solution of equation $\left(
1\right)  $ associated with $u^{n}$.
\end{lemma}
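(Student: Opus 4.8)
The plan is to combine the Chattering Lemma (Lemma 9) with stability estimates for BSDEs. Given a relaxed control $q$, Lemma 9 produces a sequence $(u^{n})_{n}\subset\mathcal{U}$ of strict controls with $\delta_{u_{t}^{n}}(da)\,dt\to q_{t}(da)\,dt$ stably. I would let $(y^{q},z^{q})$ solve the relaxed BSDE $(25)$ and let $(y^{n},z^{n})$ solve the strict BSDE $(1)$ driven by $u^{n}$. The goal is to control the difference of the two solutions and then pass to the limit. Writing the drift of $(1)$ under $u^{n}$ as $\int_{U}b(t,y_{t}^{n},z_{t}^{n},a)\delta_{u_{t}^{n}}(da)$, the two equations have the same terminal value $\xi$ and differ only in their generators, so their difference satisfies a BSDE with zero terminal condition.

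**First I would** set $\bar y_{t}=y_{t}^{n}-y_{t}^{q}$ and $\bar z_{t}=z_{t}^{n}-z_{t}^{q}$ and decompose the difference of generators into two parts: a part coming from the Lipschitz dependence of $b$ on $(y,z)$, namely $\int_{U}[b(t,y_{t}^{n},z_{t}^{n},a)-b(t,y_{t}^{q},z_{t}^{q},a)]\delta_{u_{t}^{n}}(da)$, and a measure part
\[
\rho_{t}^{n}=\int_{U}b(t,y_{t}^{q},z_{t}^{q},a)\,\delta_{u_{t}^{n}}(da)-\int_{U}b(t,y_{t}^{q},z_{t}^{q},a)\,q_{t}(da).
\]
By assumptions $(4)$ and $(28)$, $b$ is bounded and uniformly Lipschitz in $(y,z)$, so the standard a priori $L^{2}$-estimates for BSDEs (the same Proposition 3.2 of Briand et al. used in Lemma 2) give
\[
\mathbb{E}\Big[\sup_{t\in[0,T]}|\bar y_{t}|^{2}+\int_{0}^{T}|\bar z_{t}|^{2}dt\Big]\leq C\,\mathbb{E}\int_{0}^{T}|\rho_{t}^{n}|^{2}dt.
\]
This reduces $(32)$ and $(33)$ to showing that the right-hand side tends to zero.

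**The main obstacle** is precisely the convergence $\mathbb{E}\int_{0}^{T}|\rho_{t}^{n}|^{2}dt\to0$. For each fixed $(t,\omega)$ the integrand $a\mapsto b(t,y_{t}^{q}(\omega),z_{t}^{q}(\omega),a)$ is bounded and continuous in $a$, so Lemma 8 applies pointwise and gives $\rho_{t}^{n}\to0$ for $dt\otimes d\mathcal{P}$-a.e. $(t,\omega)$. Since $b$ is bounded, $|\rho_{t}^{n}|$ is uniformly bounded, and the dominated convergence theorem yields $\mathbb{E}\int_{0}^{T}|\rho_{t}^{n}|^{2}dt\to0$, establishing $(32)$ and $(33)$. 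The delicate point here is that Lemma 8 is stated for a fixed deterministic $f(t,a)$, whereas $b(t,y_{t}^{q},z_{t}^{q},a)$ depends on $\omega$ through the trajectory; one argues for $\mathcal{P}$-almost every $\omega$ by freezing the path $(y_{t}^{q}(\omega),z_{t}^{q}(\omega))$ and invoking the continuity of $b$ in $a$ uniformly on the compact set $U$.

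**Finally**, to obtain $(34)$ I would write
\[
J(u^{n})-\mathcal{J}(q)=\mathbb{E}\big[g(y_{0}^{n})-g(y_{0}^{q})\big]+\mathbb{E}\int_{0}^{T}\Big[\int_{U}h(t,y_{t}^{n},z_{t}^{n},a)\delta_{u_{t}^{n}}(da)-\int_{U}h(t,y_{t}^{q},z_{t}^{q},a)q_{t}(da)\Big]dt.
\]
The first term goes to zero because $g_{y}$ is bounded (so $g$ is Lipschitz) and $(32)$ holds. The integral term splits exactly as before: the Lipschitz part is controlled by $(32)$ and $(33)$ via the Cauchy–Schwarz inequality, while the measure part converges to zero by the same dominated convergence argument applied to $h$ (which is bounded and continuous in $a$ under $(28)$ and $(4)$). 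Collecting these estimates gives $(34)$, and the lemma is proved.
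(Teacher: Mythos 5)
Your proposal is correct and follows essentially the same route as the paper: the same splitting of the generator difference into a Lipschitz part and a measure part, the same a priori BSDE estimate from Briand et al.\ applied to the difference equation with zero terminal value, the same stable-convergence-plus-dominated-convergence argument for the measure term, and the same Lipschitz/Cauchy--Schwarz treatment of the cost functionals. If anything, you are slightly more careful than the paper, since you explicitly justify applying the stable-convergence lemma to the $\omega$-dependent integrand $b(t,y_{t}^{q}(\omega),z_{t}^{q}(\omega),a)$ by freezing the path, a point the paper passes over in silence.
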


\begin{proof}
We have%
\begin{align*}
d\left(  y_{t}^{n}-y_{t}^{q}\right)   &  =\left[  b\left(  t,y_{t}^{n}%
,z_{t}^{n},u_{t}^{n}\right)  -b\left(  t,y_{t}^{q},z_{t}^{q},u_{t}^{n}\right)
\right]  dt\\
&  +\left[  b\left(  t,y_{t}^{q},z_{t}^{q},u_{t}^{n}\right)  -%
{\displaystyle\int\nolimits_{U}}
b\left(  t,y_{t}^{q},z_{t}^{q},a\right)  q_{t}\left(  da\right)  \right]  dt\\
&  +\left(  z_{t}^{n}-z_{t}^{q}\right)  dW_{t}%
\end{align*}

Put%
\begin{align*}
Y_{t}^{n}  &  =y_{t}^{n}-y_{t}^{q},\\
Z_{t}^{n}  &  =z_{t}^{n}-z_{t}^{q},
\end{align*}
and%
\begin{align}
\varphi^{n}\left(  t,Y_{t}^{n},Z_{t}^{n}\right)   &  =b\left(  t,y_{t}%
^{q},z_{t}^{q},u_{t}^{n}\right)  -%
{\displaystyle\int\nolimits_{U}}
b\left(  t,y_{t}^{q},z_{t}^{q},a\right)  q_{t}\left(  da\right) \\
&  +%
{\displaystyle\int\nolimits_{0}^{1}}
b_{y}\left(  t,y_{t}^{q}+\lambda\left(  y_{t}^{n}-y_{t}^{q}\right)  ,z_{t}%
^{q}+\lambda\left(  z_{t}^{n}-z_{t}^{q}\right)  ,u_{t}^{n}\right)  Y_{t}%
^{n}d\lambda\nonumber\\
&  +%
{\displaystyle\int\nolimits_{0}^{1}}
b_{z}\left(  t,y_{t}^{q}+\lambda\left(  y_{t}^{n}-y_{t}^{q}\right)  ,z_{t}%
^{q}+\lambda\left(  z_{t}^{n}-z_{t}^{q}\right)  ,u_{t}^{n}\right)  Z_{t}%
^{n}d\lambda.\nonumber
\end{align}

Then%
\begin{equation}
\left\{
\begin{array}
[c]{l}%
dY_{t}^{n}=\varphi^{n}\left(  t,Y_{t}^{n},Z_{t}^{n}\right)  dt+Z_{t}^{n}%
dW_{t},\\
Y_{T}^{n}=0.
\end{array}
\right.
\end{equation}

The above equation is a linear BSDE with bounded coefficients and with
terminal condition $Y_{T}^{n}=0$, then by applying a priori estimates (see
Briand et al $\left[  8\right]  $), we get%
\[
\mathbb{E}\left[  \underset{t\in\left[  0,T\right]  }{\sup}\left\vert
Y_{t}^{n}\right\vert ^{2}+%
{\displaystyle\int\nolimits_{0}^{T}}
\left\vert Z_{t}^{n}\right\vert ^{2}dt\right]  \leq C\mathbb{E}\left\vert
{\displaystyle\int\nolimits_{0}^{T}}
\left\vert \varphi^{n}\left(  t,0,0\right)  \right\vert dt\right\vert ^{2}.
\]

From $\left(  34\right)  $, we get%
\begin{align}
&  \mathbb{E}\left[  \underset{t\in\left[  0,T\right]  }{\sup}\left\vert
Y_{t}^{n}\right\vert ^{2}+%
{\displaystyle\int\nolimits_{0}^{T}}
\left\vert Z_{t}^{n}\right\vert ^{2}dt\right] \\
&  \leq C\mathbb{E}%
{\displaystyle\int\nolimits_{0}^{T}}
\left\vert
{\displaystyle\int\nolimits_{U}}
b\left(  t,y_{t}^{q},z_{t}^{q},a\right)  \delta_{u_{t}^{n}}\left(  da\right)
-%
{\displaystyle\int\nolimits_{U}}
b\left(  t,y_{t}^{q},z_{t}^{q},a\right)  q_{t}\left(  da\right)  \right\vert
^{2}dt.\nonumber
\end{align}

By $\left(  30\right)  $ and the dominated convergence theorem, the term in
the right hand side of the above inequality tends to zero as $n$ tends to
infinity. This prove $\left(  31\right)  $ and $\left(  32\right)  $.

\ 

Let us prove $\left(  33\right)  $

\ 

Since $g$\ and $h$\ are Lipshitz continuous in $\left(  y,z\right)  $, then by
using the Cauchy-Schwartz inequality, we have%
\begin{align*}
&  \left\vert J\left(  u^{n}\right)  -\mathcal{J}\left(  q\right)  \right\vert
\\
&  \leq C\left(  \mathbb{E}\left\vert y_{0}^{n}-y_{0}^{q}\right\vert
^{2}\right)  ^{1/2}+C\left(  \int_{0}^{T}\mathbb{E}\left\vert y_{t}^{n}%
-y_{t}^{q}\right\vert ^{2}ds\right)  ^{1/2}+C\left(  \mathbb{E}\int_{0}%
^{T}\left\vert z_{t}^{n}-z_{t}^{q}\right\vert ^{2}dt\right)  ^{1/2}\\
&  +C\left(  \mathbb{E}\int_{0}^{T}\left\vert
{\displaystyle\int\nolimits_{U}}
b\left(  t,y_{t}^{q},z_{t}^{q},a\right)  \delta_{u_{t}^{n}}\left(  da\right)
-\int_{U}h\left(  t,y_{t}^{q},z_{t}^{q},a\right)  q_{t}\left(  da\right)
\right\vert ^{2}dt\right)  ^{1/2}.
\end{align*}

From $\left(  31\right)  $ and $\left(  32\right)  $\ the first, the second
and the third terms in the right hand side converge to zero, and by $\left(
30\right)  $ and the dominated convergence theorem, the fourth term in the
right hand side tends to zero.
\end{proof}

\begin{remark}
As a consequence, it is easy to see that the strict and relaxed optimal
control problems have the same value function.
\end{remark}

\subsection{necessary optimality conditions for near controls}

In this section we derive necessary optimality conditions for near optimal
controls. This result is based on Ekeland's variational principle which is
given by the following.

\begin{lemma}
(Ekeland's variational principle). \textit{Let }$\left(  E,d\right)
$\textit{\ be a complete metric space and }$f:E\longrightarrow\overline
{\mathbb{R}}$\textit{\ be lower-semicontinuous and bounded from below. Given
}$\varepsilon>0$, suppose $u^{\varepsilon}\in E$ satisfies $f\left(
u^{\varepsilon}\right)  \leq\inf\left(  f\right)  +\varepsilon.$ Then for any
$\lambda>0$, \textit{there exists }$v\in E$\textit{\ such that }

\begin{enumerate}
\item $f\left(  v\right)  \leq f\left(  u^{\varepsilon}\right)  .$

\item $d\left(  u^{\varepsilon},v\right)  \leq\lambda.$

\item $f\left(  v\right)  <f\left(  w\right)  +\frac{\varepsilon}{\lambda
}d\left(  v,w\right)  \;,\;\forall\;w\neq v.$
\end{enumerate}
\end{lemma}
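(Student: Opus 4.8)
The plan is to realize $v$ as the limit of a sequence that descends through a carefully chosen partial order on $E$, so that $v$ becomes a ``minimal'' element witnessing the sharp inequality in property (3). Write $\alpha = \varepsilon/\lambda > 0$ and define a relation on $E$ by
\[
w \preceq u \iff f(w) + \alpha\, d(w,u) \le f(u).
\]
First I would check that $\preceq$ is a genuine partial order: reflexivity is immediate, antisymmetry follows by adding the two defining inequalities for $w \preceq u$ and $u \preceq w$ (which forces $d(w,u)=0$), and transitivity follows from the triangle inequality for $d$. The point of this order is that any $v$ minimal for $\preceq$ automatically satisfies property (3), since the existence of $w \neq v$ with $w \preceq v$ is exactly the negation of the desired strict inequality.

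Next I would construct a $\preceq$-decreasing sequence $(u_n)$ starting from $u_0 = u^{\varepsilon}$. Given $u_n$, set $S_n = \{w \in E : w \preceq u_n\}$ (nonempty, as it contains $u_n$) and $m_n = \inf_{w \in S_n} f(w)$ (finite, since $f$ is bounded below), then choose $u_{n+1} \in S_n$ with $f(u_{n+1}) \le m_n + 2^{-n}$. Because $u_{n+1} \preceq u_n$, the defining inequality gives $\alpha\, d(u_n, u_{n+1}) \le f(u_n) - f(u_{n+1})$, so $f(u_n)$ is nonincreasing and bounded below, hence convergent; telescoping this bound shows $\sum_n d(u_n, u_{n+1}) < \infty$, so $(u_n)$ is Cauchy. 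By completeness of $(E,d)$ it converges to some $v \in E$.

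Finally I would verify the three conclusions. Passing to the limit in $f(u_m) + \alpha\, d(u_m, u_n) \le f(u_n)$, which holds for $m \ge n$ by transitivity, and using continuity of $d$ together with lower semicontinuity of $f$, yields $v \preceq u_n$ for every $n$; in particular $v \preceq u^{\varepsilon}$, which gives (1) directly and gives (2) via $\alpha\, d(v, u^{\varepsilon}) \le f(u^{\varepsilon}) - f(v) \le f(u^{\varepsilon}) - \inf f \le \varepsilon$, i.e. $d(v, u^{\varepsilon}) \le \lambda$. The delicate step, and the main obstacle, is (3): I would argue by contradiction, assuming some $w \neq v$ satisfies $w \preceq v$. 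Transitivity with $v \preceq u_{n+1}$ gives $w \preceq u_{n+1}$, hence $\alpha\, d(w, u_{n+1}) \le f(u_{n+1}) - f(w) \le (m_n + 2^{-n}) - m_n = 2^{-n}$, where $f(w) \ge m_n$ because $w \preceq v \preceq u_n$ puts $w \in S_n$. Thus $u_{n+1} \to w$; but $u_{n+1} \to v$, so $w = v$, a contradiction. Hence no such $w$ exists, which is precisely property (3). The subtlety throughout is keeping straight the two bounds $f(w) \ge m_n$ and $f(u_{n+1}) \le m_n + 2^{-n}$, so that the controlled gap $2^{-n}$ drives $d(w, u_{n+1})$ to zero.
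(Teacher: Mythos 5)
Your proof is correct. Note, however, that the paper does not prove this lemma at all: its ``proof'' is simply the citation ``See Ekeland [10],'' so there is nothing internal to compare against. What you have written is a complete, self-contained argument, and it is essentially the classical proof from Ekeland's original paper: introduce the Bishop--Phelps-type order $w \preceq u \iff f(w) + (\varepsilon/\lambda)\, d(w,u) \le f(u)$, iterate an almost-minimization step over the sets $S_n$ of predecessors, use the telescoping bound $\alpha \sum_n d(u_n,u_{n+1}) \le f(u_0) - \inf f$ to get a Cauchy sequence, and pass to the limit using completeness, lower semicontinuity of $f$, and continuity of $d$; the contradiction argument for (3), driven by the gap $f(u_{n+1}) \le m_n + 2^{-n}$ versus $f(w) \ge m_n$, is exactly the standard mechanism. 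All steps check out: transitivity of $\preceq$ follows from the triangle inequality, $v \preceq u_n$ survives the limit because $f(v) + \alpha d(v,u_n) \le \liminf_m \left[ f(u_m) + \alpha d(u_m,u_n) \right] \le f(u_n)$, and conclusions (1), (2), (3) follow as you state. One small point of hygiene: since $f$ is allowed to take the value $+\infty$, the relation $\preceq$ is not genuinely antisymmetric on the set $\{f = +\infty\}$, and the statement implicitly requires $f \not\equiv +\infty$; but your argument never actually invokes antisymmetry, and every point in your chain has finite $f$-value (because $f(u_{n+1}) \le m_n + 2^{-n} \le f(u_0) + 2^{-n} < \infty$), so the proof is unaffected. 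In short: where the paper defers to the literature, you supply the standard proof, correctly executed.
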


\begin{proof}
See Ekeland $\left[  10\right]  .$
\end{proof}

To apply Ekeland's variational principle, we have to endow the set
$\mathcal{U}$ of strict controls with an appropriate metric. For any
$u,v\in\mathcal{U}$, we set
\[
d\left(  u,v\right)  =\mathcal{P}\otimes dt\left\{  \left(  \omega,t\right)
\in\Omega\times\left[  0,T\right]  ,\;u\left(  t,\omega\right)  \neq v\left(
t,\omega\right)  \right\}  ,
\]
where $\mathcal{P}\otimes dt$ is the product measure of $\mathcal{P}$ with the
Lebesgue measure $dt$.

\ 

Let us summarize some of the properties satisfied by $d.$

\begin{lemma}
\begin{enumerate}
\item $\left(  \mathcal{U},d\right)  $\textit{\ is a complete metric space.}

\item The cost functional $J$ is continuous from $\mathcal{U}$ into
$\mathbb{R}$.
\end{enumerate}
\end{lemma}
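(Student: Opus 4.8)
The plan is to treat the two assertions separately, first establishing the metric-space structure and then using the BSDE stability estimates to obtain continuity of $J$.

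I would begin by checking that $d$ is genuinely a metric on $\mathcal{U}$, modulo the usual identification of controls agreeing $\mathcal{P}\otimes dt$-almost everywhere. Symmetry is clear, $d(u,v)=0$ forces $u=v$ a.e., and the triangle inequality follows from the set inclusion $\{u\neq w\}\subseteq\{u\neq v\}\cup\{v\neq w\}$ together with subadditivity of $\mathcal{P}\otimes dt$. For completeness I would take a Cauchy sequence $(u^{n})_{n}$ and extract a subsequence $(u^{n_{k}})_{k}$ with $d(u^{n_{k}},u^{n_{k+1}})\leq 2^{-k}$. Writing $A_{k}=\{(\omega,t):u^{n_{k}}(t,\omega)\neq u^{n_{k+1}}(t,\omega)\}$, the bound $\sum_{k}\mathcal{P}\otimes dt(A_{k})<\infty$ and the Borel--Cantelli lemma give $\mathcal{P}\otimes dt(\limsup_{k}A_{k})=0$. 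Off this null set the values $u^{n_{k}}(t,\omega)$ are eventually constant, hence convergent, and I define $u(t,\omega)$ as this eventual value (and arbitrarily, in $U$, on the null set). Since each $u^{n_{k}}$ is $U$-valued and progressively measurable, so is the a.e. limit $u$ after choosing a progressively measurable version; because $U$ is compact by $(28)$, the integrability requirement in the definition of $\mathcal{U}$ holds automatically, so $u\in\mathcal{U}$. The inclusion $\{u^{n_{k}}\neq u\}\subseteq\bigcup_{j\geq k}A_{j}$ (up to the null set) gives $d(u^{n_{k}},u)\leq\sum_{j\geq k}2^{-j}\to 0$, and a Cauchy sequence with a convergent subsequence converges to the same limit; this proves completeness.

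For the continuity of $J$, suppose $d(u^{n},u)\to 0$ and let $(y^{n},z^{n})$, $(y^{u},z^{u})$ denote the corresponding solutions of $(1)$. I would first establish trajectory stability exactly as in the stability lemma (Lemma $11$): setting $Y^{n}=y^{n}-y^{u}$, $Z^{n}=z^{n}-z^{u}$ and linearizing the driver along the segment joining the two solutions, $(Y^{n},Z^{n})$ solves a linear BSDE with bounded coefficients, zero terminal condition, and source term $b(t,y_{t}^{u},z_{t}^{u},u_{t}^{n})-b(t,y_{t}^{u},z_{t}^{u},u_{t})$. The a priori estimate (Briand et al $[8]$) then yields
\[
\mathbb{E}\left[\underset{t\in[0,T]}{\sup}|Y_{t}^{n}|^{2}+\int_{0}^{T}|Z_{t}^{n}|^{2}dt\right]\leq C\,\mathbb{E}\left|\int_{0}^{T}|b(t,y_{t}^{u},z_{t}^{u},u_{t}^{n})-b(t,y_{t}^{u},z_{t}^{u},u_{t})|dt\right|^{2}.
\]
The crucial point, and the step I expect to be the main obstacle, is to show this right-hand side tends to zero. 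Here I would use that $b$ is bounded by $(28)$: the integrand vanishes off $\{u_{t}^{n}\neq u_{t}\}$ and is uniformly bounded there, so the inner integral is dominated by $2\|b\|_{\infty}\int_{0}^{T}\mathbf{1}_{\{u_{t}^{n}\neq u_{t}\}}dt$, whence the right-hand side is at most $C\,\mathbb{E}|\int_{0}^{T}\mathbf{1}_{\{u_{t}^{n}\neq u_{t}\}}dt|^{2}\leq C\,T\,d(u^{n},u)\to 0$. This is precisely where the choice of the metric $d$ pays off, and it is the reason $(28)$ (boundedness of the coefficients) is imposed in this section.

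Finally I would pass from trajectory convergence to cost convergence. Splitting
\[
J(u^{n})-J(u)=\mathbb{E}[g(y_{0}^{n})-g(y_{0}^{u})]+\mathbb{E}\int_{0}^{T}[h(t,y_{t}^{n},z_{t}^{n},u_{t}^{n})-h(t,y_{t}^{u},z_{t}^{u},u_{t})]dt,
\]
the first term is controlled by $C\,\mathbb{E}|y_{0}^{n}-y_{0}^{u}|$ since $g_{y}$ is bounded, hence tends to zero by the stability estimate. For the integral I would insert the intermediate term $h(t,y_{t}^{u},z_{t}^{u},u_{t}^{n})$: the difference $h(t,y_{t}^{n},z_{t}^{n},u_{t}^{n})-h(t,y_{t}^{u},z_{t}^{u},u_{t}^{n})$ is bounded via the Lipschitz property of $h$ in $(y,z)$ and handled by the trajectory estimates and the Cauchy--Schwarz inequality, while $h(t,y_{t}^{u},z_{t}^{u},u_{t}^{n})-h(t,y_{t}^{u},z_{t}^{u},u_{t})$ is again supported on $\{u_{t}^{n}\neq u_{t}\}$ and bounded by $(28)$, so its contribution is at most $2\|h\|_{\infty}\,d(u^{n},u)\to 0$. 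Combining these estimates gives $J(u^{n})\to J(u)$, which is the desired continuity.
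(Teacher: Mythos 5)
Your proof is correct, but it is worth noting that the paper does not actually prove this lemma at all: its entire ``proof'' is the citation ``See Mezerdi $[25]$,'' a reference that treats necessary conditions for controlled \emph{forward} diffusions with non-smooth drift. Your argument is therefore a genuine, self-contained alternative, and arguably fills a real gap, since the continuity of $J$ for a \emph{backward} system does not follow verbatim from the forward case. What you do differently: for completeness you run the standard Borel--Cantelli extraction on a fast Cauchy subsequence (eventual constancy off a $\mathcal{P}\otimes dt$-null set), correctly observing that compactness of $U$ from assumption $(28)$ makes the integrability requirement in the definition of $\mathcal{U}$ automatic and that the usual conditions on the filtration let you pick an adapted version of the limit; for continuity of $J$ you reuse precisely the machinery the paper itself deploys in its Lemmas on trajectory stability (linearize the driver along the segment joining the two solutions, apply the Briand et al a priori estimate, and bound the source term), with the key quantitative step
\[
\mathbb{E}\left\vert \int_{0}^{T}\mathbf{1}_{\{u_{t}^{n}\neq u_{t}\}}\,dt\right\vert ^{2}\leq T\,\mathbb{E}\int_{0}^{T}\mathbf{1}_{\{u_{t}^{n}\neq u_{t}\}}\,dt=T\,d\left(  u^{n},u\right)  ,
\]
which is exactly where the choice of metric and the boundedness of $b$ and $h$ in $(28)$ interact; the passage from trajectory stability to cost convergence via the intermediate term $h\left(  t,y_{t}^{u},z_{t}^{u},u_{t}^{n}\right)  $ is also sound. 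What each approach buys: the paper's citation is economical but leaves the backward adaptation implicit, while your proof makes the lemma independent of $[25]$ and demonstrates that the only ingredients needed are already present in the paper (the a priori estimate of $[8]$ and assumptions $(4)$ and $(28)$).
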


\begin{proof}
See Mezerdi $\left[  25\right]  .$
\end{proof}

Now let $\mu\in\mathcal{R}$ be an optimal relaxed control and denote by
$\left(  y^{\mu},z^{\mu}\right)  $ the trajectory of the system controlled by
$\mu$. From lemmas $9,\ 10$ and 11, there exists a sequence $\left(
u^{n}\right)  _{n}$ of strict controls such that
\begin{align*}
dt\mu_{t}^{n}\left(  da\right)   &  =dt\delta_{u_{t}^{n}}\left(  da\right)
\underset{n\longrightarrow\infty}{\longrightarrow}dt\mu_{t}\left(  da\right)
\text{ Stably},\text{\textit{\ \ }}\mathcal{P}\text{-}a.s,\\
&  \mathbb{E}\left[  \underset{t\in\left[  0,T\right]  }{\sup}\left\vert
y_{t}^{n}-y_{t}^{\mu}\right\vert ^{2}\right]  \underset{n\longrightarrow
\infty}{\longrightarrow}0,\\
&  \mathbb{E}\int_{0}^{T}\left\vert z_{t}^{n}-z_{t}^{\mu}\right\vert
^{2}dt\underset{n\longrightarrow\infty}{\longrightarrow}0.
\end{align*}
where $\left(  y_{t}^{n},z_{t}^{n}\right)  $ is the solution of equation
$\left(  25\right)  $ controlled by $\mu^{n}.$

According to the optimality of $\mu$ and $\left(  29\right)  $, there exists a
sequence $\left(  \varepsilon_{n}\right)  _{n}$ of positive real numbers with
$\underset{n\rightarrow\infty}{\lim}\varepsilon_{n}=0$ such that
\[
J\left(  u^{n}\right)  =\mathcal{J}\left(  \mu^{n}\right)  \leq\mathcal{J}%
\left(  \mu\right)  +\varepsilon_{n}.
\]

A suitable version of lemma $13$ implies that, given any $\varepsilon_{n}>0$,
there exists $\left(  u^{n}\right)  _{n}\in\mathcal{U}$ such that
\begin{align}
J\left(  u^{n}\right)   &  \leq\underset{u\in\mathcal{U}}{\inf}J\left(
u\right)  +\varepsilon_{n},\nonumber\\
J\left(  u^{n}\right)   &  \leq J\left(  u\right)  +\varepsilon_{n}d\left(
u^{n},u\right)  \;;\;\forall u\in\mathcal{U}\text{.}%
\end{align}

Let us define the perturbation%
\begin{equation}
u_{t}^{n,\theta}=\left\{
\begin{array}
[c]{l}%
v\text{ \ if }t\in\left[  \tau,\tau+\theta\right]  ,\\
u_{t}^{n}\text{ \ Otherwise.}%
\end{array}
\right.
\end{equation}

From $\left(  37\right)  $ we have
\[
0\leq J\left(  u_{t}^{n,\theta}\right)  -J\left(  u^{n}\right)  +\varepsilon
_{n}d\left(  u^{n,\theta},u_{t}^{n}\right)  .
\]

From the definition of the metric $d,$ we obtain
\begin{equation}
0\leq J\left(  u_{t}^{n,\theta}\right)  -J\left(  u^{n}\right)  +\varepsilon
_{n}C\theta.
\end{equation}

From these above inequalities, we shall establish necessary optimality
conditions for near optimal controls.

\begin{theorem}
(Necessary optimality conditions for near controls). For each $\varepsilon
_{n}>0$\textit{, there exists }$\left(  u^{n}\right)  _{n}\in\mathcal{U}$
\textit{such that there exists a unique adapted processes}%
\[
p^{n}\in\mathcal{L}^{2}\left(  \left[  0,T\right]  ;\mathbb{R}^{n}\right)  ,
\]
\textit{solution of the following forward stochastic differential equation }%
\begin{equation}
\left\{
\begin{array}
[c]{l}%
-dp_{t}^{n}=H_{y}\left(  t,y_{t}^{n},z_{t}^{n},p_{t}^{n},u_{t}^{n}\right)
dt+H_{z}\left(  t,y_{t}^{n},z_{t}^{n},p_{t}^{n},u_{t}^{n}\right)  dW_{t},\\
p_{0}^{n}=g_{y}\left(  y_{0}^{n}\right)  ,
\end{array}
\right.
\end{equation}
\textit{such that for all }$v\in\mathcal{U}$,%
\begin{equation}
0\leq\left[  H\left(  t,y_{t}^{n},z_{t}^{n},p_{t}^{n},u_{t}^{n}\right)
-H\left(  t,y_{t}^{n},z_{t}^{n},p_{t}^{n},v\right)  \right]  +C\varepsilon
_{n}.
\end{equation}

\end{theorem}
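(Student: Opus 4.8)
The plan is to adapt, for each fixed $n$, the argument of Theorem 3 together with its reformulation in Theorem 4 to the near-optimal control $u^{n}$, carrying the Ekeland penalty term through every estimate. The only structural novelty compared with Theorem 3 is the extra term $C\varepsilon_{n}\theta$ appearing in the variational inequality $\left(39\right)$; after division by $\theta$ this survives as the constant $C\varepsilon_{n}$ in $\left(41\right)$. Since the sequence $\left(u^{n}\right)_{n}$ is already supplied by the chattering lemma and Ekeland's principle, the task is to construct, for each such $u^{n}$, the associated adjoint process and to extract the near-maximum inequality from $\left(39\right)$.

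First I would apply the restriction technique of Section 3 to $u^{n}$: introduce the auxiliary unidimensional BSDE for $x_{t}^{n}$, form the augmented state $\widetilde{y}_{t}^{n}=\left(y_{t}^{n},x_{t}^{n}\right)^{T}$, and let $\widetilde{p}_{t}^{n}$ be the unique adapted solution of the adjoint equation $\left(14\right)$ (with $u^{n}$ in place of $u$) subject to $\widetilde{p}_{0}^{n}=\widetilde{g}_{y}\left(\widetilde{y}_{0}^{n}\right)$. Because $\widetilde{J}=J$, the near-optimality inequality $\left(39\right)$ reads
\[
0\leq\widetilde{J}\left(u^{n,\theta}\right)-\widetilde{J}\left(u^{n}\right)+C\varepsilon_{n}\theta.
\]
I would then expand $\widetilde{J}\left(u^{n,\theta}\right)-\widetilde{J}\left(u^{n}\right)=\mathbb{E}\left[\widetilde{g}\left(\widetilde{y}_{0}^{n,\theta}\right)-\widetilde{g}\left(\widetilde{y}_{0}^{n}\right)\right]$ by a first-order Taylor expansion, isolating the leading term $\mathbb{E}\left[\widetilde{p}_{0}^{n}\left(\widetilde{y}_{0}^{n,\theta}-\widetilde{y}_{0}^{n}\right)\right]$ plus a remainder controlled by the continuity of $\widetilde{g}_{y}$. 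Lemma 2 applies verbatim, giving $\mathbb{E}\left[\underset{t\in\left[0,T\right]}{\sup}\left\vert\widetilde{y}_{t}^{n,\theta}-\widetilde{y}_{t}^{n}\right\vert^{2}\right]\leq C\theta^{2}$ and the analogous bound for $\widetilde{z}$. Applying It\^{o}'s formula to $\widetilde{p}_{t}^{n}\left(\widetilde{y}_{t}^{n,\theta}-\widetilde{y}_{t}^{n}\right)$ and invoking the adjoint equation converts the leading term into an integral of a difference of Hamiltonians $\widetilde{H}$, plus remainder terms of order $O\left(\theta^{3/2}\right)$, exactly as in estimates $\left(18\right)$ and $\left(19\right)$.

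Combining these, dividing by $\theta$, and letting $\theta\rightarrow0$ (the dominated convergence theorem killing the $\widetilde{g}_{y}$ remainder, since $\widetilde{g}_{y}$ is continuous and bounded) would yield
\[
0\leq\mathbb{E}\left[\widetilde{H}\left(\tau,\widetilde{y}_{\tau}^{n},\widetilde{z}_{\tau}^{n},\widetilde{p}_{\tau}^{n},u_{\tau}^{n}\right)-\widetilde{H}\left(\tau,\widetilde{y}_{\tau}^{n},\widetilde{z}_{\tau}^{n},\widetilde{p}_{\tau}^{n},v\right)\right]+C\varepsilon_{n},\quad d\tau\text{-a.e.}
\]
Finally, as in the proof of Theorem 4, I would set $\widetilde{p}_{t}^{n}=\left(p_{t}^{n},-1\right)^{T}$; the identity $\widetilde{H}\left(t,\widetilde{y}_{t}^{n},\widetilde{z}_{t}^{n},\widetilde{p}_{t}^{n},\cdot\right)=H\left(t,y_{t}^{n},z_{t}^{n},p_{t}^{n},\cdot\right)$ reduces the adjoint equation $\left(14\right)$ to $\left(40\right)$ and the displayed inequality to $\left(41\right)$.

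I do not expect a genuinely new obstacle, since the difficulty of handling $z_{t}$ pointwise is already absorbed by the restriction technique of Section 3. The only points requiring care are verifying that the $O\left(\theta^{3/2}\right)$ remainder bounds and the Lemma 2 estimates hold with constants uniform enough to survive the rescaling by $1/\theta$, and confirming that the extra Ekeland term contributes precisely $C\varepsilon_{n}$ in the limit without interfering with the passage $\theta\rightarrow0$. In particular one should note that $\varepsilon_{n}$ is fixed during the spike variation, so it plays no role in the $\theta$-limit beyond producing the additive penalty.
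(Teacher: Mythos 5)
Your proposal is correct and takes essentially the same approach as the paper, whose entire proof of this theorem is the single sentence ``From inequality $\left(39\right)$, we use the same method as in the last sections with index $n$''; your plan spells out exactly that: the Section 3 restriction, the spike variation and It\^{o} computation with the $O\left(\theta^{3/2}\right)$ remainders, the $\theta\rightarrow0$ limit with the Ekeland penalty $C\varepsilon_{n}\theta$ surviving as $C\varepsilon_{n}$, and the reduction via $\widetilde{p}_{t}^{n}=\left(p_{t}^{n},-1\right)^{T}$ as in Theorem 4. The only detail you leave implicit is the final conditioning step (the $\mathbf{1}_{F}$ argument at the end of Theorem 3's proof) needed to pass from your expectation inequality to the pointwise form $\left(41\right)$, but that step is part of the ``argument of Theorem 3'' you state you are adapting.
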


\begin{proof}
From inequality $\left(  39\right)  $, we use the same method as in the last
sections with index $n$.
\end{proof}

\subsection{Necessary and sufficient optimality conditions for relaxed
controls}

In this subsection, we will state and prove necessary as well as sufficient
optimality conditions for relaxed controls. For this end, let us summarize and
prove some of lemmas that we will use thereafter.

\ 

Introduce the following adjoint equation in the relaxed form%
\begin{equation}
\left\{
\begin{array}
[c]{l}%
-dp_{t}^{\mu}=H_{y}^{\mu}\left(  t,y_{t}^{\mu},z_{t}^{\mu},p_{t}^{\mu},\mu
_{t}\right)  dt+H_{z}^{\mu}\left(  t,y_{t}^{\mu},z_{t}^{\mu},p_{t}^{\mu}%
,\mu_{t}\right)  dW_{t},\\
p_{0}^{\mu}=g_{y}\left(  y_{0}^{\mu}\right)  ,
\end{array}
\right.
\end{equation}%
\[
p^{\mu}\in\mathcal{L}^{2}\left(  \left[  0,T\right]  ;\mathbb{R}^{n}\right)
,
\]
where the Hamiltonian $H^{\mu}$ in the relaxed form is defined from $\left[
0,T\right]  \times\mathbb{R}^{n}\times\mathcal{M}_{n\times d}\left(
\mathbb{R}\right)  \times\mathbb{R}^{n}\times\mathbb{P}\left(  U\right)  $
into $\mathbb{R}$ by%
\[
H^{\mu}\left(  t,y_{t}^{\mu},z_{t}^{\mu},p_{t}^{\mu},\mu_{t}\right)
=p_{t}^{\mu}\int_{U}b\left(  t,y_{t}^{\mu},z_{t}^{\mu},a\right)  \mu
_{t}\left(  a\right)  -\int_{U}h\left(  t,y_{t}^{\mu},z_{t}^{\mu},a\right)
\mu_{t}\left(  a\right)  .
\]

For simplicity of notation, we denote
\begin{align*}
f^{n}(t)  &  =f\left(  t,y_{t}^{n},z_{t}^{n},u_{t}^{n}\right)  ,\\
f^{\mu}(t)  &  =\int_{U}\left(  t,y_{t}^{\mu},z_{t}^{\mu},a\right)  \mu
_{t}\left(  a\right)  ,
\end{align*}
\bigskip where $f$ stands for one of the functions $b_{y},b_{z},h_{y},h_{z}.$

\begin{lemma}
The following estimations hold%
\begin{align}
\underset{n\rightarrow\infty}{\lim}\mathbb{E}\int_{0}^{t}\left\vert b_{y}%
^{n}\left(  s\right)  -b_{y}^{\mu}\left(  s\right)  \right\vert ^{2}ds  &
=0,\\
\underset{n\rightarrow\infty}{\lim}\mathbb{E}\int_{0}^{t}\left\vert b_{z}%
^{n}\left(  s\right)  -b_{z}^{\mu}\left(  s\right)  \right\vert ^{2}ds  &
=0,\\
\underset{n\rightarrow\infty}{\lim}\mathbb{E}\int_{0}^{t}\left\vert h_{y}%
^{n}\left(  s\right)  -h_{y}^{\mu}\left(  s\right)  \right\vert ^{2}ds  &
=0,\\
\underset{n\rightarrow\infty}{\lim}\mathbb{E}\int_{0}^{t}\left\vert h_{z}%
^{n}\left(  s\right)  -h_{z}^{\mu}\left(  s\right)  \right\vert ^{2}ds  &  =0.
\end{align}

\end{lemma}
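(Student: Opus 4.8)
The plan is to prove all four estimates (43)--(46) by a single argument, since the reasoning is identical when $f$ denotes any of $b_y,b_z,h_y,h_z$; by (4) these derivatives are continuous and uniformly bounded, and by (4) together with (28) they are Lipschitz continuous in $(y,z)$ uniformly in $(s,a)$. The key device is to insert the intermediate quantity $f(s,y_s^\mu,z_s^\mu,u_s^n)=\int_U f(s,y_s^\mu,z_s^\mu,a)\,\delta_{u_s^n}(da)$, which isolates the contribution of the trajectory from that of the control measure. Writing
\[
f^n(s)-f^\mu(s)=\Big[f(s,y_s^n,z_s^n,u_s^n)-f(s,y_s^\mu,z_s^\mu,u_s^n)\Big]+\Big[\int_U f(s,y_s^\mu,z_s^\mu,a)\,\delta_{u_s^n}(da)-\int_U f(s,y_s^\mu,z_s^\mu,a)\,\mu_s(da)\Big],
\]
and using $|x+y|^2\le 2|x|^2+2|y|^2$, it suffices to show that $\mathbb{E}\int_0^t|\cdot|^2\,ds$ of each bracket tends to zero.

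For the first bracket I would invoke the Lipschitz continuity of $f$ in $(y,z)$ to obtain the pathwise bound $C\big(|y_s^n-y_s^\mu|^2+|z_s^n-z_s^\mu|^2\big)$, so that
\[
\mathbb{E}\int_0^t\big|f(s,y_s^n,z_s^n,u_s^n)-f(s,y_s^\mu,z_s^\mu,u_s^n)\big|^2\,ds\le C\,\mathbb{E}\int_0^t\big(|y_s^n-y_s^\mu|^2+|z_s^n-z_s^\mu|^2\big)\,ds,
\]
and the right-hand side converges to zero by (31) and (32) of Lemma 11. This part is routine. (If only Lipschitz continuity in $z$ is assumed, the $y$-increment is instead controlled by uniform continuity together with the a.s. uniform convergence $y^n\to y^\mu$ along a subsequence, again combined with boundedness.)

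The main obstacle is the second bracket, because the integrand $a\mapsto f(s,y_s^\mu,z_s^\mu,a)$ is random through $(y_s^\mu,z_s^\mu)$, so the stable convergence of Lemma 10 cannot be applied globally. I would instead fix $\omega$ outside the $\mathcal{P}$-null set on which the chattering convergence (29) fails; for such $\omega$ the map $a\mapsto f(s,y_s^\mu(\omega),z_s^\mu(\omega),a)$ is continuous in $a$, measurable in $s$, and bounded, so Lemma 10 applies pathwise and yields
\[
\int_U f(s,y_s^\mu,z_s^\mu,a)\,\delta_{u_s^n}(da)\underset{n\to\infty}{\longrightarrow}\int_U f(s,y_s^\mu,z_s^\mu,a)\,\mu_s(da),\quad dt\text{-a.e.},\ \mathcal{P}\text{-a.s.}
\]
Since $f$ is uniformly bounded, the squared difference is dominated by a fixed constant, so the dominated convergence theorem on $\Omega\times[0,t]$ with the measure $\mathcal{P}\otimes dt$ forces the corresponding $\mathbb{E}\int_0^t|\cdot|^2\,ds$ to zero. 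Combining the two brackets then gives (43)--(46). The delicate points to check carefully are the pathwise applicability of the stable convergence (measurability in $s$, continuity in $a$, and boundedness of the $\omega$-frozen integrand) and the existence of a uniform dominating constant needed for the dominated convergence step.
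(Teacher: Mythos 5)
Your proposal is correct and follows essentially the same strategy as the paper: split $f^{n}-f^{\mu}$ into a trajectory increment and a control increment, kill the former using the $L^{2}$ convergence of $\left(  y^{n},z^{n}\right)  $ to $\left(  y^{\mu},z^{\mu}\right)  $ from Lemma 11, and kill the latter using the stable convergence of Lemma 10 together with the dominated convergence theorem and the uniform bound on the derivatives. Two points of comparison are worth recording. First, your main route bounds the trajectory increment by Lipschitz continuity of $f$ in $\left(  y,z\right)  $, but assumption $\left(  28\right)  $ only provides Lipschitz continuity in $z$; the paper accordingly uses a three-term decomposition, treating the $z$-increment by the Lipschitz bound and $\left(  32\right)  $, and the $y$-increment by boundedness, continuity and the dominated convergence theorem via $\left(  31\right)  $ --- which is precisely the fallback you describe in your parenthetical remark. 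So your proof survives, but the parenthetical is the actual argument required by the stated hypotheses, not an aside. Second, your pathwise justification of the stable-convergence step (freezing $\omega$ outside the null set of $\left(  29\right)  $, checking measurability in $s$, continuity in $a$ and boundedness of the frozen integrand, then dominating on $\Omega\times\left[  0,t\right]  $ under $\mathcal{P}\otimes dt$) is a more careful rendering of what the paper disposes of in one line by citing the chattering lemma and dominated convergence; this extra care is welcome, since the integrand is random through $\left(  y_{s}^{\mu},z_{s}^{\mu}\right)  $ and Lemma 10 as stated applies to deterministic integrands.
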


\begin{proof}
We have%
\begin{align*}
\mathbb{E}\int_{0}^{t}\left\vert b_{y}^{n}\left(  s\right)  -b_{y}^{\mu
}\left(  s\right)  \right\vert ^{2}ds  &  =\mathbb{E}\int_{0}^{t}\left\vert
b_{y}\left(  s,y_{s}^{n},z_{s}^{n},u_{s}^{n}\right)  -\int_{U}b_{y}\left(
s,y_{s}^{\mu},z_{s}^{\mu},a\right)  \mu_{s}\left(  a\right)  \right\vert
^{2}ds\\
&  \leq\mathbb{E}\int_{0}^{t}\left\vert b_{y}\left(  s,y_{s}^{n},z_{s}%
^{n},u_{s}^{n}\right)  -b_{y}\left(  s,y_{s}^{\mu},z_{s}^{n},u_{s}^{n}\right)
\right\vert ^{2}ds\\
&  +\mathbb{E}\int_{0}^{t}\left\vert b_{y}\left(  s,y_{s}^{\mu},z_{s}%
^{n},u_{s}^{n}\right)  -b_{y}\left(  s,y_{s}^{\mu},z_{s}^{\mu},u_{s}%
^{n}\right)  \right\vert ^{2}ds\\
&  +\mathbb{E}\int_{0}^{t}\left\vert b_{y}\left(  s,y_{s}^{\mu},z_{s}^{\mu
},u_{s}^{n}\right)  -\int_{U}b_{y}\left(  s,y_{s}^{\mu},z_{s}^{\mu},a\right)
\mu_{s}\left(  a\right)  \right\vert ^{2}ds.
\end{align*}

Since $b_{y}$ is Lipschitz continuous in $z$, then
\begin{align}
\mathbb{E}\int_{0}^{t}\left\vert b_{y}^{n}\left(  s\right)  -b_{y}^{\mu
}\left(  s\right)  \right\vert ^{2}ds  &  \leq\mathbb{E}\int_{0}^{t}\left\vert
b_{y}\left(  s,y_{s}^{n},z_{s}^{n},u_{s}^{n}\right)  -b_{y}\left(
s,y_{s}^{\mu},z_{s}^{n},u_{s}^{n}\right)  \right\vert ^{2}ds\nonumber\\
&  +C\mathbb{E}\int_{0}^{t}\left\vert z_{s}^{n}-z_{s}^{\mu}\right\vert
^{2}ds\\
&  +\mathbb{E}\int_{0}^{t}\left\vert b_{y}\left(  s,y_{s}^{\mu},z_{s}^{\mu
},u_{s}^{n}\right)  -\int_{U}b_{y}\left(  s,y_{s}^{\mu},z_{s}^{\mu},a\right)
\mu_{s}\left(  a\right)  \right\vert ^{2}ds.\nonumber
\end{align}

From $\left(  32\right)  $, we have%
\[
\underset{n\rightarrow\infty}{\lim}\mathbb{E}\int_{0}^{t}\left\vert z_{s}%
^{n}-z_{s}^{\mu}\right\vert ^{2}ds=0.
\]

Since $b_{y}$ is bounded and continuous, then by $\left(  31\right)  $ and the
dominate convergence theorem, we have%
\[
\underset{n\rightarrow\infty}{\lim}\mathbb{E}\int_{0}^{t}\left\vert
b_{y}\left(  s,y_{s}^{n},z_{s}^{n},u_{s}^{n}\right)  -b_{y}\left(
s,y_{s}^{\mu},z_{s}^{n},u_{s}^{n}\right)  \right\vert ^{2}ds=0.
\]

On the other hand, by the chattering lemma and the dominate convergence
theorem, we have%
\[
\underset{n\rightarrow\infty}{\lim}\mathbb{E}\int_{0}^{t}\left\vert \int
_{U}b_{y}\left(  s,y_{s}^{\mu},z_{s}^{\mu},a\right)  \delta_{u_{s}^{n}}\left(
da\right)  -\int_{U}b_{y}\left(  s,y_{s}^{\mu},z_{s}^{\mu},a\right)  \mu
_{s}\left(  a\right)  \right\vert ^{2}ds=0.
\]

By $\left(  47\right)  $ and these above three limits, we deduce $\left(
43\right)  $. Using the same method and arguments, we prove $\left(
44\right)  ,\left(  45\right)  $ and $\left(  46\right)  $.
\end{proof}

\begin{lemma}
Let $p^{n}$ and $p^{\mu}$ respectively the solutions of $\left(  40\right)  $
and $\left(  42\right)  $, then we have
\begin{equation}
\underset{n\rightarrow\infty}{\lim}\mathbb{E}\left[  \underset{t\in\left[
0,T\right]  }{\sup}\left\vert p_{t}^{n}-p_{t}^{\mu}\right\vert ^{2}\right]
=0.
\end{equation}

\end{lemma}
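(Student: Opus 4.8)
The plan is to write down the linear equation solved by the difference $P_{t}^{n}=p_{t}^{n}-p_{t}^{\mu}$ and to read off its inhomogeneous terms as precisely the coefficient differences controlled in the previous lemma. Recalling that $H_{y}=p\,b_{y}-h_{y}$ and $H_{z}=p\,b_{z}-h_{z}$, subtracting $\left( 42\right) $ from $\left( 40\right) $ and adding and subtracting $p_{t}^{\mu}b_{y}^{n}(t)$ in the drift and $p_{t}^{\mu}b_{z}^{n}(t)$ in the diffusion, one obtains
\[
\left\{
\begin{array}{l}
dP_{t}^{n}=\left[ -b_{y}^{n}(t)P_{t}^{n}+\gamma_{t}^{n}\right] dt+\left[ -b_{z}^{n}(t)P_{t}^{n}+\delta_{t}^{n}\right] dW_{t},\\
P_{0}^{n}=g_{y}\left( y_{0}^{n}\right) -g_{y}\left( y_{0}^{\mu}\right) ,
\end{array}
\right.
\]
with
\[
\gamma_{t}^{n}=-p_{t}^{\mu}\left( b_{y}^{n}(t)-b_{y}^{\mu}(t)\right) +\left( h_{y}^{n}(t)-h_{y}^{\mu}(t)\right) ,\quad \delta_{t}^{n}=-p_{t}^{\mu}\left( b_{z}^{n}(t)-b_{z}^{\mu}(t)\right) +\left( h_{z}^{n}(t)-h_{z}^{\mu}(t)\right) .
\]
Because $b_{y}$ and $b_{z}$ are uniformly bounded by $\left( 4\right) $, this is a linear stochastic differential equation with bounded coefficients, so the classical $L^{2}$ a priori estimate yields
\[
\mathbb{E}\left[ \underset{t\in\left[ 0,T\right] }{\sup}\left\vert P_{t}^{n}\right\vert ^{2}\right] \leq C\left( \mathbb{E}\left\vert P_{0}^{n}\right\vert ^{2}+\mathbb{E}\int_{0}^{T}\left\vert \gamma_{t}^{n}\right\vert ^{2}dt+\mathbb{E}\int_{0}^{T}\left\vert \delta_{t}^{n}\right\vert ^{2}dt\right) .
\]
It therefore suffices to show that each of the three terms on the right tends to $0$.

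For the initial term, since $g_{y}$ is continuous and uniformly bounded and $y_{0}^{n}\rightarrow y_{0}^{\mu}$ in $L^{2}$ by $\left( 31\right) $, the dominated convergence theorem gives $\mathbb{E}\left\vert g_{y}\left( y_{0}^{n}\right) -g_{y}\left( y_{0}^{\mu}\right) \right\vert ^{2}\rightarrow 0$. For the source terms I would use $\left\vert \gamma_{t}^{n}\right\vert ^{2}\leq 2\left\vert p_{t}^{\mu}\right\vert ^{2}\left\vert b_{y}^{n}(t)-b_{y}^{\mu}(t)\right\vert ^{2}+2\left\vert h_{y}^{n}(t)-h_{y}^{\mu}(t)\right\vert ^{2}$, and the analogous splitting of $\delta_{t}^{n}$. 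The purely $h$-parts integrate to $0$ at once: $\mathbb{E}\int_{0}^{T}\left\vert h_{y}^{n}-h_{y}^{\mu}\right\vert ^{2}dt\rightarrow 0$ by $\left( 45\right) $ and $\mathbb{E}\int_{0}^{T}\left\vert h_{z}^{n}-h_{z}^{\mu}\right\vert ^{2}dt\rightarrow 0$ by $\left( 46\right) $.

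The main obstacle is the remaining piece $\mathbb{E}\int_{0}^{T}\left\vert p_{t}^{\mu}\right\vert ^{2}\left\vert b_{y}^{n}(t)-b_{y}^{\mu}(t)\right\vert ^{2}dt$ (and its $b_{z}$-analogue), since $p^{\mu}$ is only known to belong to $\mathcal{L}^{2}$, so that $\left( 43\right) $ and $\left( 44\right) $ cannot be applied on their own. Here I would exploit that $b_{y}$ and $b_{z}$ are uniformly bounded, so that $\left\vert b_{y}^{n}-b_{y}^{\mu}\right\vert ^{2}\leq C$ while, by $\left( 43\right) $, this quantity tends to $0$ in $L^{1}\left( \Omega\times\left[ 0,T\right] \right) $; from an arbitrary subsequence one then extracts a further subsequence along which $\left\vert b_{y}^{n}-b_{y}^{\mu}\right\vert ^{2}\rightarrow 0$ almost everywhere in $\left( \omega,t\right) $, and since the integrand is dominated by $C\left\vert p_{t}^{\mu}\right\vert ^{2}\in L^{1}$ the dominated convergence theorem forces the product integral to $0$ along that subsequence, hence along the whole sequence. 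The $b_{z}$-term is treated identically with $\left( 44\right) $. Collecting these convergences shows that the right-hand side of the a priori estimate tends to $0$, which is exactly $\left( 49\right) $.
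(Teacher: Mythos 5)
Your proof is correct and takes essentially the same route as the paper: subtract the two adjoint equations, identify the inhomogeneous terms as exactly the coefficient differences controlled by Lemma 16 together with the $g_{y}$ difference, and conclude via a Gronwall/Burkholder--Davis--Gundy type $L^{2}$ estimate, handling the $p^{\mu}\left(  b_{y}^{n}-b_{y}^{\mu}\right)  $ and $p^{\mu}\left(  b_{z}^{n}-b_{z}^{\mu}\right)  $ terms by boundedness of $b_{y},b_{z}$ plus dominated convergence. Your explicit subsequence argument for that dominated-convergence step is spelled out more carefully than in the paper (which passes from $L^{1}$ convergence via Cauchy--Schwartz and then invokes dominated convergence tersely), but the underlying idea is identical.
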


\begin{proof}
From $\left(  40\right)  $ and $\left(  42\right)  $, we have%
\begin{align*}
p_{t}^{n}  &  =g_{y}\left(  y_{0}^{n}\right)  -\int_{0}^{t}H_{y}^{n}\left(
s\right)  ds-\int_{0}^{t}H_{z}^{n}\left(  s\right)  dW_{s},\\
p_{t}^{\mu}  &  =g_{y}\left(  y_{0}^{\mu}\right)  -\int_{0}^{t}H_{y}^{\mu
}\left(  s\right)  ds-\int_{0}^{t}H_{z}^{\mu}\left(  s\right)  dW_{s},
\end{align*}
where%
\begin{align*}
H_{y}^{n}\left(  t\right)   &  =H_{y}\left(  t,y_{t}^{n},z_{t}^{n},p_{t}%
^{n},u_{t}^{n}\right)  \ \ ;\ \ H_{y}^{\mu}\left(  t\right)  =\int_{U}%
H_{y}\left(  t,y_{t}^{\mu},z_{t}^{\mu},p_{t}^{\mu},a\right)  \mu_{t}\left(
a\right)  ,\\
H_{z}^{n}\left(  t\right)   &  =H_{z}\left(  t,y_{t}^{n},z_{t}^{n},p_{t}%
^{n},u_{t}^{n}\right)  \ \ ;\ \ H_{z}^{\mu}\left(  t\right)  =\int_{U}%
H_{z}\left(  t,y_{t}^{\mu},z_{t}^{\mu},p_{t}^{\mu},a\right)  \mu_{t}\left(
a\right)  .
\end{align*}

Then%
\begin{align*}
\mathbb{E}\left\vert p_{t}^{n}-p_{t}^{\mu}\right\vert ^{2}  &  \leq
C\mathbb{E}\left\vert g_{y}\left(  y_{0}^{n}\right)  -g_{y}\left(  y_{0}^{\mu
}\right)  \right\vert ^{2}+C\mathbb{E}\int_{0}^{t}\left\vert H_{y}^{n}\left(
s\right)  -H_{y}^{\mu}\left(  s\right)  \right\vert ^{2}ds\\
&  +C\mathbb{E}\int_{0}^{t}\left\vert H_{z}^{n}\left(  s\right)  -H_{z}^{\mu
}\left(  s\right)  \right\vert ^{2}ds\\
&  \leq C\mathbb{E}\int_{0}^{t}\left\vert b_{y}^{n}\left(  s\right)  \left(
p_{s}^{n}-p_{s}^{\mu}\right)  \right\vert ^{2}ds+C\mathbb{E}\int_{0}%
^{t}\left\vert b_{z}^{n}\left(  s\right)  \left(  p_{s}^{n}-p_{s}^{\mu
}\right)  \right\vert ^{2}ds+C\alpha_{t}^{n},
\end{align*}
where
\begin{align}
\alpha_{t}^{n}  &  =\mathbb{E}\left\vert g_{y}\left(  y_{0}^{n}\right)
-g_{y}\left(  y_{0}^{\mu}\right)  \right\vert ^{2}+\mathbb{E}\int_{0}%
^{t}\left\vert h_{y}^{n}\left(  s\right)  -h_{y}^{\mu}\left(  s\right)
\right\vert ^{2}ds\\
&  +\mathbb{E}\int_{0}^{t}\left\vert \left(  b_{y}^{n}\left(  s\right)
-b_{y}^{\mu}\left(  s\right)  \right)  p_{s}^{\mu}\right\vert ^{2}%
ds+\mathbb{E}\int_{0}^{t}\left\vert h_{z}^{n}\left(  s\right)  -h_{z}^{\mu
}\left(  s\right)  \right\vert ^{2}ds\nonumber\\
&  +\mathbb{E}\int_{0}^{t}\left\vert \left(  b_{z}^{n}\left(  s\right)
-b_{z}^{\mu}\left(  s\right)  \right)  p_{s}^{\mu}\right\vert ^{2}ds.\nonumber
\end{align}

Since $b_{y}$ and $b_{z}$ are bounded then%
\begin{equation}
\mathbb{E}\left\vert p_{t}^{n}-p_{t}^{\mu}\right\vert ^{2}\leq2C\mathbb{E}%
\int_{0}^{t}\left\vert p_{s}^{n}-p_{s}^{\mu}\right\vert ^{2}ds+C\alpha_{t}%
^{n}.
\end{equation}

\ 

Let us prove that $\underset{n\rightarrow\infty}{\lim}\alpha_{t}^{n}=0$

\ 

Since $g_{y}$ is bounded and continuous, then by $\left(  31\right)  $ and the
dominated convergence theorem, we have%
\begin{equation}
\underset{n\rightarrow\infty}{\lim}\mathbb{E}\left\vert g_{y}\left(  y_{0}%
^{n}\right)  -g_{y}\left(  y_{0}^{\mu}\right)  \right\vert ^{2}=0.
\end{equation}

On the other hand, since $b_{y}$ is bounded, then%
\begin{equation}
\left\vert \left[  b_{y}^{n}\left(  s\right)  -b_{y}^{\mu}\left(  s\right)
\right]  p_{s}^{n}\right\vert \leq2C\left\vert p_{s}^{n}\right\vert .
\end{equation}

Hence by the Cauchy-Schwartz inequality we get,
\[
\mathbb{E}\int_{0}^{t}\left\vert \left[  b_{y}^{n}\left(  s\right)
-b_{y}^{\mu}\left(  s\right)  \right]  p_{s}^{\mu}\right\vert ds\leq\left(
\mathbb{E}\int_{0}^{t}\left\vert b_{y}^{n}\left(  s\right)  -b_{y}^{\mu
}\left(  s\right)  \right\vert ^{2}ds\right)  ^{1/2}\left(  \mathbb{E}\int
_{0}^{t}\left\vert p_{s}^{\mu}\right\vert ^{2}ds\right)  ^{1/2}.
\]

Since $p^{\mu}\in\mathcal{L}^{2}\left(  \left[  0,T\right]  ;\mathbb{R}%
^{n}\right)  $, then%
\[
\mathbb{E}\int_{0}^{t}\left\vert \left[  b_{y}^{n}\left(  s\right)
-b_{y}^{\mu}\left(  s\right)  \right]  p_{s}^{\mu}\right\vert ds\leq C\left(
\mathbb{E}\int_{0}^{t}\left\vert b_{y}^{n}\left(  s\right)  -b_{y}^{\mu
}\left(  s\right)  \right\vert ^{2}ds\right)  ^{1/2}.
\]

By $\left(  43\right)  $, we have
\[
\underset{n\rightarrow\infty}{\lim}\mathbb{E}\int_{0}^{t}\left\vert b_{y}%
^{n}\left(  s\right)  -b_{y}^{\mu}\left(  s\right)  \right\vert ^{2}ds=0.
\]

Then, we deduce that%
\begin{equation}
\underset{n\rightarrow\infty}{\lim}\mathbb{E}\int_{0}^{t}\left\vert \left[
b_{y}^{n}\left(  s\right)  -b_{y}^{\mu}\left(  s\right)  \right]  p_{s}^{\mu
}\right\vert ds=0.
\end{equation}

By using the dominated convergence theorem we obtain
\begin{equation}
\underset{n\rightarrow\infty}{\lim}\mathbb{E}\int_{0}^{t}\left\vert \left[
b_{y}^{n}\left(  s\right)  -b_{y}^{\mu}\left(  s\right)  \right]  p_{s}^{\mu
}\right\vert ^{2}ds=0.
\end{equation}

Similarly, using $\left(  44\right)  $, the boundeness of $b_{z}$ and the
dominated convergence theorem, it follows that%
\begin{equation}
\underset{n\rightarrow\infty}{\lim}\mathbb{E}\int_{0}^{t}\left\vert \left[
b_{z}^{n}\left(  s\right)  -b_{z}^{\mu}\left(  s\right)  \right]  p_{s}^{\mu
}\right\vert ^{2}ds=0.
\end{equation}

From $\left(  45\right)  ,\left(  46\right)  ,\left(  51\right)  ,\left(
54\right)  $ and $\left(  55\right)  $, it is easy to see that
\begin{equation}
\underset{n\rightarrow\infty}{\lim}\alpha_{t}^{n}=0.
\end{equation}

Finally from $\left(  50\right)  ,\left(  56\right)  $, Gronwall's lemma and
Bukholder-Davis-Gundy inequality, we have the desired result$.$
\end{proof}

\ 

\begin{theorem}
(Necessary optimality conditions for relaxed controls). \textit{Let }$\mu$
\textit{be an optimal relaxed control minimizing the cost }$\mathcal{J}%
$\textit{\ over }$\mathcal{R}$ \textit{and }$\left(  y_{t}^{\mu},z_{t}^{\mu
}\right)  $\textit{\ the corresponding optimal trajectory. Then there exists a
unique adapted processes }
\[
p^{\mu}\in\mathcal{L}^{2}\left(  \left[  0,T\right]  ;\mathbb{R}^{n}\right)
,
\]
\textit{solution of the stochastic forward differential equation }$\left(
42\right)  $, \textit{such that for all} $q\in\mathcal{R}$, we have
\begin{equation}
H^{\mu}\left(  t,y_{t}^{\mu},z_{t}^{\mu},p_{t}^{\mu},\mu_{t}\right)
=\underset{q\in\mathbb{P}\left(  U\right)  }{\max}H^{\mu}\left(  t,y_{t}^{\mu
},z_{t}^{\mu},p_{t}^{\mu},q\right)  .
\end{equation}

\end{theorem}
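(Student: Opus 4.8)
The plan is to obtain the relaxed maximum principle $(57)$ as the limit of the near-optimality conditions $(40)$--$(41)$ established in Theorem $15$. The strategy has three stages: first set up the approximating sequence via the chattering lemma, then pass to the limit in both the adjoint equation and the Hamiltonian inequality using the stability lemmas, and finally upgrade the resulting inequality into the maximum condition over $\mathbb{P}(U)$.

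\textbf{Setting up the approximation.} Since $\mu$ is the optimal relaxed control, I would invoke Lemma $9$ (the chattering lemma) to produce a sequence $(u^{n})_{n}\subset\mathcal{U}$ of strict controls with $dt\,\delta_{u_{t}^{n}}(da)\to dt\,\mu_{t}(da)$ stably. By Lemma $11$ the associated trajectories $(y^{n},z^{n})$ converge to $(y^{\mu},z^{\mu})$ in the senses $(31)$--$(32)$, and by the argument following Lemma $14$ the sequence can be taken $\varepsilon_{n}$-optimal in Ekeland's sense. Theorem $15$ then furnishes, for each $n$, an adjoint process $p^{n}$ solving $(40)$ together with the near-optimality inequality $(41)$: for every $v\in\mathcal{U}$,
\[
0\leq\left[H\left(t,y_{t}^{n},z_{t}^{n},p_{t}^{n},u_{t}^{n}\right)-H\left(t,y_{t}^{n},z_{t}^{n},p_{t}^{n},v\right)\right]+C\varepsilon_{n}.
\]

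\textbf{Passing to the limit.} The existence and uniqueness of $p^{\mu}$ solving the relaxed adjoint equation $(42)$ follows from the Pardoux--Peng theorem, since the relaxed coefficients inherit the Lipschitz and growth bounds (as noted in Remark $7$). The key convergence is $\mathbb{E}[\sup_{t}|p_{t}^{n}-p_{t}^{\mu}|^{2}]\to 0$, which is precisely the content of Lemma $17$, itself resting on the coefficient estimates $(43)$--$(46)$ of Lemma $16$. With this in hand, I would take $n\to\infty$ in the near-optimality inequality. For a fixed deterministic $a\in U$, choosing $v\equiv a$ and using the joint continuity of $H$ in $(y,z,p,v)$ together with the trajectory and adjoint convergences, the term $H(t,y_{t}^{n},z_{t}^{n},p_{t}^{n},u_{t}^{n})$ converges (after integration against $\mu_{t}$, via Lemma $10$) to $H^{\mu}(t,y_{t}^{\mu},z_{t}^{\mu},p_{t}^{\mu},\mu_{t})$, while $H(t,y_{t}^{n},z_{t}^{n},p_{t}^{n},a)$ converges to $H(t,y_{t}^{\mu},z_{t}^{\mu},p_{t}^{\mu},a)$. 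Since $\varepsilon_{n}\to 0$, the limit yields
\[
H^{\mu}\left(t,y_{t}^{\mu},z_{t}^{\mu},p_{t}^{\mu},\mu_{t}\right)\geq H\left(t,y_{t}^{\mu},z_{t}^{\mu},p_{t}^{\mu},a\right)\quad\text{for all }a\in U,\ dt\text{-a.e., a.s.}
\]

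\textbf{Upgrading to the maximum over $\mathbb{P}(U)$.} The inequality above bounds $H^{\mu}(\cdots,\mu_{t})$ below by $H$ evaluated at every Dirac mass $\delta_{a}$. Because $H^{\mu}(t,y,z,p,q)=\int_{U}H(t,y,z,p,a)\,q(da)$ is \emph{affine} in the measure variable $q$, its maximum over the convex set $\mathbb{P}(U)$ is attained at an extreme point, i.e.\ at some $\delta_{a^{*}}$, so that $\max_{q\in\mathbb{P}(U)}H^{\mu}(\cdots,q)=\sup_{a\in U}H(\cdots,a)$. The pointwise bound then forces $H^{\mu}(\cdots,\mu_{t})=\max_{q\in\mathbb{P}(U)}H^{\mu}(\cdots,q)$, which is exactly $(57)$. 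The main obstacle I anticipate is the limit-passage in the Hamiltonian term involving $u^{n}$: one must carefully combine the stable convergence $\delta_{u_{t}^{n}}\to\mu_{t}$ (handling the control variable through Lemma $10$) with the strong $L^{2}$ convergence of $(y^{n},z^{n},p^{n})$, and control the interaction between these two distinct convergence modes — a measurable-selection or subsequence argument together with the dominated convergence theorem, justified by the uniform boundedness of $b,h$ and their derivatives from $(28)$, should close this gap.
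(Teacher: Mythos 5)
Your proposal is correct and follows essentially the same route as the paper: invoke Theorem 15 (near-optimality conditions obtained via the chattering lemma and Ekeland's principle) and pass to the limit using the stable convergence $(29)$, the trajectory convergences $(31)$--$(32)$, and the adjoint convergence $(48)$ of Lemma 17. In fact your write-up is more complete than the paper's, which compresses everything into ``the result follows immediately by letting $n$ go to infinity''; in particular, your closing argument — that affinity of $q\mapsto H^{\mu}(t,y_{t}^{\mu},z_{t}^{\mu},p_{t}^{\mu},q)$ on the convex set $\mathbb{P}(U)$ reduces the maximum over $\mathbb{P}(U)$ to the supremum over Dirac masses — is exactly the step the paper leaves implicit.
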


\begin{proof}
Let $\mu$ be an optimal relaxed control. By the necessary condition for near
controls (Theorem 15), there exists a sequence $\left(  u^{n}\right)
_{n}\subset\mathcal{U}$ such that \textit{for all} $v\in\mathcal{U}$%
\[
0\leq\left[  H\left(  t,y_{t}^{n},z_{t}^{n},p_{t}^{n},u_{t}^{n}\right)
-H\left(  t,y_{t}^{n},z_{t}^{n},p_{t}^{n},v\right)  \right]  +C\varepsilon
_{n},
\]
where $\underset{n\rightarrow\infty}{\lim}\varepsilon_{n}=0.$

According to $\left(  29\right)  ,\left(  31\right)  ,\left(  31\right)  $ and
$\left(  48\right)  $, the result follows immediately by letting $n$ going to
infinity in the last inequality.
\end{proof}

\begin{remark}
If $\mu_{t}\left(  da\right)  =\delta_{u\left(  t\right)  }\left(  da\right)
$, we recover the strict necessary optimality conditions (Theorem 4)$.$
\end{remark}

\begin{theorem}
(Sufficient optimality conditions for relaxed controls). We know that the set
$\mathcal{R}$ of relaxed controls is convex and the function $H^{q}\left(
t,y_{t}^{q},z_{t}^{q},p_{t}^{q},q_{t}\right)  $ is linear in $q_{t}$ . If we
assume that for every $q\in\mathcal{R}$ and for all $t\in\left[  0,T\right]
$, the functions $g$ is convex and $\left(  y_{t}^{q},z_{t}^{q}\right)
\longrightarrow H^{q}\left(  t,y_{t}^{q},z_{t}^{q},p_{t}^{q},q_{t}\right)  $
is concave, then $\mu$ is an optimal relaxed control if it satisfies $\left(
57\right)  .$
\end{theorem}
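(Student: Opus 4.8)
The plan is to mirror the proof of the sufficient optimality conditions for strict controls, the only genuinely new feature being that the control variable now ranges over the convex set $\mathbb{P}(U)$ and that, by hypothesis, $H^{q}$ is linear in the control argument. Let $\mu$ satisfy $(57)$, with trajectory $(y^{\mu},z^{\mu})$ and adjoint process $p^{\mu}$ solving $(42)$, and let $q\in\mathcal{R}$ be arbitrary, with trajectory $(y^{q},z^{q})$. Writing $\overline{b},\overline{h}$ for the averaged coefficients introduced in the relaxed formulation, I would start from
\[
\mathcal{J}(q)-\mathcal{J}(\mu)=\mathbb{E}[g(y_{0}^{q})-g(y_{0}^{\mu})]+\mathbb{E}\int_{0}^{T}[\overline{h}(t,y_{t}^{q},z_{t}^{q},q_{t})-\overline{h}(t,y_{t}^{\mu},z_{t}^{\mu},\mu_{t})]\,dt,
\]
and, using convexity of $g$ together with $p_{0}^{\mu}=g_{y}(y_{0}^{\mu})$, bound the first expectation below by $\mathbb{E}[p_{0}^{\mu}(y_{0}^{q}-y_{0}^{\mu})]$.

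Next I would apply It\^{o}'s formula to $p_{t}^{\mu}(y_{t}^{q}-y_{t}^{\mu})$, using the relaxed state equation $(25)$ for both $y^{q}$ and $y^{\mu}$ and the adjoint equation $(42)$. Since $y_{T}^{q}-y_{T}^{\mu}=0$, the terminal term drops out; recalling that $H^{\mu}=p^{\mu}\overline{b}-\overline{h}$, the product $p_{t}^{\mu}(\overline{b}^{q}-\overline{b}^{\mu})$ regenerates a difference of Hamiltonians plus a difference of running costs, and the $\overline{h}$-terms cancel exactly as in the strict case. This yields the relaxed analogue of the intermediate inequality obtained there (the one leading to $(24)$), namely
\[
\mathcal{J}(q)-\mathcal{J}(\mu)\geq\mathbb{E}\int_{0}^{T}\left[H_{y}^{\mu}(y_{t}^{q}-y_{t}^{\mu})+H_{z}^{\mu}(z_{t}^{q}-z_{t}^{\mu})+H^{\mu}(t,y_{t}^{\mu},z_{t}^{\mu},p_{t}^{\mu},\mu_{t})-H^{\mu}(t,y_{t}^{q},z_{t}^{q},p_{t}^{\mu},q_{t})\right]dt,
\]
where $H_{y}^{\mu}$ and $H_{z}^{\mu}$ are evaluated along $(t,y_{t}^{\mu},z_{t}^{\mu},p_{t}^{\mu},\mu_{t})$.

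It then remains to check that the integrand is nonnegative. Writing $H^{\mu}(t,y,z,p,q)=\int_{U}H(t,y,z,p,a)\,q(da)$, this map is concave in $(y,z)$, as an average of the concave maps $(y,z)\mapsto H(t,y,z,p,a)$, and linear in $q$; I would use the resulting first-order inequality
\[
H^{\mu}(t,y_{t}^{q},z_{t}^{q},p_{t}^{\mu},q_{t})-H^{\mu}(t,y_{t}^{\mu},z_{t}^{\mu},p_{t}^{\mu},\mu_{t})\leq H_{y}^{\mu}(y_{t}^{q}-y_{t}^{\mu})+H_{z}^{\mu}(z_{t}^{q}-z_{t}^{\mu})+H^{\mu}(t,y_{t}^{\mu},z_{t}^{\mu},p_{t}^{\mu},q_{t})-H^{\mu}(t,y_{t}^{\mu},z_{t}^{\mu},p_{t}^{\mu},\mu_{t}),
\]
in which the last two terms form the exact increment of $H^{\mu}$ in the control direction because of linearity in $q$. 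Substituting this bound cancels the first-order terms and leaves the integrand dominated below by $H^{\mu}(t,y_{t}^{\mu},z_{t}^{\mu},p_{t}^{\mu},\mu_{t})-H^{\mu}(t,y_{t}^{\mu},z_{t}^{\mu},p_{t}^{\mu},q_{t})$, which is $\geq 0$ by the maximum condition $(57)$. Hence $\mathcal{J}(q)\geq\mathcal{J}(\mu)$ for every $q\in\mathcal{R}$, so $\mu$ is optimal.

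The delicate point is the concavity step above. The state-gradients $H_{y}^{\mu},H_{z}^{\mu}$ must be read at the optimal control $\mu_{t}$, as they are in $(42)$, whereas pointwise concavity in $(y,z)$ only directly controls the increment with gradients read at $q_{t}$; reconciling the two forces the inequality to be used jointly in $(y,z,q)$ rather than separately, and it is precisely the linearity of $H^{\mu}$ in $q$ that converts the control-direction increment into an exact term and lets $(57)$ close the estimate in one line. This linearity is the relaxed substitute for the convex-optimization argument (Ekeland--Temam) used in the strict case, and verifying that the stated concavity and linearity hypotheses genuinely furnish the joint first-order inequality is the main thing to get right.
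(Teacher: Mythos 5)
Your strategy coincides with the paper's: its entire proof of this theorem is the single sentence ``The proof is the same that in theorem 5,'' and your write-up is a faithful transplant of that strict-control argument (convexity of $g$ plus $p_{0}^{\mu}=g_{y}(y_{0}^{\mu})$, It\^{o}'s formula applied to $p_{t}^{\mu}(y_{t}^{q}-y_{t}^{\mu})$, then a Hamiltonian inequality closed by $(57)$). Up to and including your relaxed analogue of $(24)$, the steps are correct.

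However, the ``delicate point'' you flag at the end is not a point you can wave through --- it is a genuine gap, and your proposed resolution does not establish it. The inequality you need,
\[
H^{\mu}\left(  t,y_{t}^{q},z_{t}^{q},p_{t}^{\mu},q_{t}\right)  -H^{\mu}\left(
t,y_{t}^{\mu},z_{t}^{\mu},p_{t}^{\mu},q_{t}\right)  \leq H_{y}^{\mu}\left(
y_{t}^{q}-y_{t}^{\mu}\right)  +H_{z}^{\mu}\left(  z_{t}^{q}-z_{t}^{\mu
}\right)  ,
\]
carries state-gradients evaluated at $\left(  y_{t}^{\mu},z_{t}^{\mu},\mu
_{t}\right)  $, whereas concavity of $\left(  y,z\right)  \mapsto H^{\mu
}\left(  t,y,z,p_{t}^{\mu},q_{t}\right)  $ for \emph{fixed} $q_{t}$ only
yields it with gradients evaluated at $\left(  y_{t}^{\mu},z_{t}^{\mu}
,q_{t}\right)  $. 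Linearity in the measure variable makes the control
increment exact, as you say, but it does nothing to bridge this mismatch in
where the $\left(  y,z\right)  $-gradients are read. Indeed, the joint
first-order inequality you invoke is precisely the supergradient inequality
of a function jointly concave in $\left(  y,z,q\right)  $, and joint
concavity does \emph{not} follow from ``concave in $\left(  y,z\right)  $ for
each $q$, linear in $q$'': take $H\left(  t,y,z,p,a\right)  =-\left(
y-a\right)  ^{2}$, which is concave in $\left(  y,z\right)  $ for every $a$,
while $\int_{U}H\,q\left(  da\right)  $ fails joint concavity (test the points
$\left(  y_{1},q_{1}\right)  =\left(  0,\delta_{0}\right)  $, $\left(
y_{2},q_{2}\right)  =\left(  1,\delta_{1}\right)  $: both give the value $0$,
but the midpoint gives $-1/4$). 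To close the argument one must assume joint
concavity of $\left(  y,z,q\right)  \mapsto H^{\mu}\left(  t,y,z,p_{t}^{\mu
},q\right)  $ --- the exact analogue of the hypothesis of Theorem 5 --- rather
than the separate concavity stated in the theorem. To be fair, this defect is
inherited from the paper itself: the hypotheses of Theorem 19 are weaker than
what the Theorem 5 argument actually uses, and the paper's one-line proof
never confronts the issue; but your proof, as written, leaves its key
inequality unjustified.
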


\begin{proof}
The proof is the same that in theorem 5.
\end{proof}

\end{document}